\newtheorem{prop}[]{Proposition}
\newtheorem{lemma}[prop]{Lemma}
\newtheorem{defi}[prop]{Definition}
\newtheorem{remark}[prop]{Remark}
\newtheorem{theo}[prop]{Theorem}
\newtheorem{coro}[prop]{Corollary}
\newtheorem{exam}[prop]{Example}
\begin{document}

\thispagestyle{empty}

\begin{center}

{\Large\sc Intermediate subgroups of braid groups \\ are not bi-orderable} 

\vspace{8mm}

{\large Raquel M. de A. Cruz}

\vspace{5mm}

\end{center}

\noindent{\bf Abstract:} Let $M$  be the disk or a compact, connected surface without boundary different from the sphere $S^2$ and the real projective plane $\mathbb{R}P^2$, and let $N$ be a compact, connected surface (possibly with boundary). It is known that the pure braid groups $P_n(M)$ of $M$ are bi-orderable, and, for $n\geq 3$, that the full braid groups $B_n(M)$ of $M$ are not bi-orderable. The main purpose of this article is to show that for all $n \geq 3$, any subgroup $H$ of $B_n(N)$ that satisfies $P_n(N) \subsetneq H \subset B_n(N)$ is not bi-orderable.

\vspace{5mm}

\noindent {\bf Key words:} Braid groups, pure braid groups, group orderings.

\vspace{3mm}

\noindent {\bf 2020 Mathematics Subject Classification:} primary: 20F36; secondary: 20F60, 06F15.


\section{Introduction}\label{section:introduction}

\hspace{4mm} A group $G$ is said to be \textit{left-orderable} if it can be equipped with a strict total ordering $<$ which is left-invariant. More precisely, if $x < y$, then $gx < gy$ for all $x,y,g \in G$. If the ordering $<$ is also right-invariant, then $(G, <)$ is said to be \textit{bi-orderable}. Some examples of bi-orderable groups are free groups \cite{Shimbireva1947}, Thompson's groups \cite{BurilloGM2007, NavasRivas2010}, torsion-free nilpotent groups \cite[Proposition 3.16]{Paris2000}, diagram groups \cite[Theorem 6.1]{GubaSapir2006}, and the fundamental group of some $3$-manifolds \cite{RW1}. Left-orderable groups that cannot be bi-ordered include the groups $\operatorname{Homeo}_{+}(\mathbb{R})$ and $\operatorname{Homeo}_{+}(\mathbb{Q \times Q})$ of order-preserving homeomorphisms \cite{Navas2010}, the universal covering group of the group $SL_2 (\mathbb{R})$ of $2 \times 2$ real matrices with determinant equal to one \cite{Bergman1991}, and the fundamental group of the Klein bottle \cite{RW1}.

Let $M$ be a compact, connected surface, and let $n \geq 1$. The set:
\begin{equation*}
\{(z_1, \dots, z_n); \ z_i \in M \ \text{and} \ z_i \neq z_j, \ \text{for} \ i \neq j\}
\end{equation*}
is called the \textit{nth (ordered) configuration space of} $M$ and is denoted by $F_n(M)$. The map $( \sigma, (z_1, \dots, z_n)) \mapsto (z_{\sigma(1)}, \dots, z_{\sigma(n)})$ defines a free action of $S_n$, the symmetric group on $n$ letters, on $F_n(M)$. The corresponding quotient, which is the orbit space $F_n(M)/S_n$, will be denoted by $C_n(M)$, and is called the \textit{nth unordered configuration space of} $M$. The \textit{pure braid group of $M$ on} $n$ \textit{strands}, denoted by $P_n(M)$, is defined to be the fundamental group of $F_n(M)$, and the \textit{(full) braid group of} $M$ on $n$ \textit{strands}, denoted by $B_n(M)$, is defined to be the fundamental group of $C_n(M)$ \cite{FN, FoxNeuwirth1962, Z}. 

The braid groups $B_n(D)$ of the disk $D$, also known as \textit{Artin braid groups}, are residually finite \cite[Proposition 5]{A1, Baumslag1963, Cohen1989}, Hopfian \cite{Malcev1940} and linear \cite{Bigelow2000, Krammer2000}. They admit solvable word problem \cite{Artin1947} and conjugacy problem \cite{G}, and they are not locally indicable for $n \geq 5$ \cite{RhemtullaRolfsen2002}. Moreover, if $M$ is a compact, connected surface without boundary, the braid groups $B_n(M)$ are torsion-free with the exception of the sphere $S^2$ and the real projective plane $\mathbb{R}P^2$ \cite{FN, FV, VB}. If $M \neq D, S^2$ is such that its fundamental group has trivial center, then the center of $B_n(M)$ is trivial \cite{GonçalvesGuaschi2004, PR}. More properties of surface braid groups may be found in \cite{Birman1974, GM2011, GuaschiJP2015, Hansen1989, PR}.

As proved by Dehornoy \cite{D}, braid groups of the disk are left-orderable. However, it is known that they cannot be bi-ordered, because they do not have the unique root property (see Equation \ref{eq:3}). Since $B_n(D) \subset B_n(M)$, where $D$ is a topological disk in $M$ and $M$ is a compact, connected surface different from $S^2$ and $\mathbb{R}P^2$ \cite{PR}, $B_n(M)$ is not bi-orderable for $n \geq 3$. Garside \cite{G} defined a partial order on the semigroup $B_n(D)^{+}$ of the so-called \textit{positive braids}, i.e. non-trivial braids of the disk that can be written in terms of positive powers of the Artin generators (see Theorem \ref{theo:presentationBn}). Later, Elrifai and Morton \cite{EM} gave a partial ordering of $B_n(D)$.

The pure braid groups of the disk and of compact, connected orientable surfaces with the exception of the sphere $S^2$ are bi-orderable \cite{GM, KR}. However, if $N$ is a non-orientable surface and $n \geq 2$, González-Meneses \cite{GM} exhibited generalized torsion elements in $P_n(N)$, and therefore $P_n(N)$ is not bi-orderable. Moreover, if $M$ is $S^2$ or $\mathbb{R}P^2$, then $P_n(M)$ contains torsion elements \cite{FV, VB}, so it is not left-orderable, and consequently, the braid groups $B_n(S^2)$ and $B_n(\mathbb{R}P^2)$ are not left-orderable.

In this article, we will study the bi-orderability of subgroups $H$ of $B_n(M)$ satisfying $P_n(M) \subsetneq H \subset B_n(M)$, where either $M = D$ or $M$ is a compact, connected surface different from $S^2$ and $\mathbb{R}P^2$. We shall refer to such subgroups as \textit{intermediate subgroups of} $B_n(M)$. We first discuss the case of the disk, and we then use a restriction of the embedding $B_n(D) \hookrightarrow B_n(M)$ \cite{Go, PR}, induced by the inclusion of $D$ in $M$, to extend our results to compact, connected surfaces. More precisely, our main results are as follows.

\begin{theo}\label{theo:teorema1}
Let $n \geq 3$. If $H$ is a bi-orderable subgroup of $B_n(D)$ such that $P_n(D) \subset H \subset B_n(D)$, then $H = P_n(D)$, i.e. $P_n(D)$ is a maximal bi-orderable subgroup of $B_n(D)$.
\end{theo}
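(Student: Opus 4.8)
I would prove the contrapositive: if $P_n(D)\subsetneq H\subseteq B_n(D)$ then $H$ is not bi-orderable (so, $P_n(D)$ being itself bi-orderable, it is maximal among bi-orderable subgroups). Begin with a reduction. The permutation homomorphism $B_n(D)\twoheadrightarrow S_n$ has kernel $P_n(D)$, so the subgroups with $P_n(D)\subseteq H\subseteq B_n(D)$ correspond bijectively to the subgroups $\overline H\leq S_n$, and $H\supsetneq P_n(D)$ exactly when $\overline H\neq\{1\}$. A subgroup of a bi-orderable group is bi-orderable, and by Cauchy's theorem a nontrivial $\overline H$ contains an element of prime order $p$; hence it is enough to prove that $\langle P_n(D),\beta\rangle$ is not bi-orderable whenever $\overline\beta\in S_n$ has prime order. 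Moreover $\langle P_n(D),\beta\rangle$ is, up to conjugacy in $B_n(D)$ and hence up to isomorphism, determined by the $S_n$-conjugacy class of the cyclic group generated by $\overline\beta$, so I may take $\overline\beta$ to be a product of $k$ disjoint $p$-cycles supported on an initial block of $kp$ strands.

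I would then use the two standard obstructions: a bi-ordered group has the unique root property ($x^m=y^m\Rightarrow x=y$), and no nontrivial element of it is conjugate to its inverse (the sign of an element with respect to the order is a conjugacy invariant, so it has a generalized-torsion-free structure). For most permutation types the contradiction comes from non-uniqueness of roots of the full twist $\Delta^2$: it is central in $B_n(D)$, hence lies in $P_n(D)\subseteq H$, and any $m$-th root of $\Delta^2$ in $B_n(D)$ is periodic, so by the classification of periodic braids it is conjugate to a power of $\sigma_1\cdots\sigma_{n-1}$ or of $\sigma_1^2\sigma_2\cdots\sigma_{n-1}$. Whenever $\overline H$ contains a permutation of the cycle type of such a root, I can choose a root $\delta$ of $\Delta^2$ whose permutation lies in $\overline H$, so that $\delta\in H$; since $\delta$ is not central, some pure braid $\gamma$ does not commute with it, and then $\gamma\delta\gamma^{-1}$ is a second $m$-th root of $\Delta^2$ with the same permutation — hence also in $H$ — and distinct from $\delta$, contradicting unique roots. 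The remaining cases I would settle by exhibiting a nontrivial pure braid $g$ with $g\cdot\beta g\beta^{-1}\cdots\beta^{p-1}g\beta^{-(p-1)}=1$ — for $p=2$ simply $\beta g\beta^{-1}=g^{-1}$ — a generalized-torsion relation among elements of $H$.

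The main obstacle lies in these remaining cases — the classification forces the realizable cycle types to have at most one fixed point, so subgroups such as $\langle(1\,2)\rangle$ with $n\geq4$, or $\langle(1\,2\,3)\rangle$ with $n\geq5$, escape the first argument — and, within them, in verifying the generalized-torsion relation honestly. To locate $g$ I would compute conjugation by $\beta$ on the standard generators $A_{ij}$ of $P_n(D)$, pass to the induced $\Z$-linear map $\phi$ on $P_n(D)^{\mathrm{ab}}\cong\Z^{\binom{n}{2}}$ (which satisfies $\phi^p=\mathrm{id}$, since $\beta^p\in P_n(D)$ acts as an inner automorphism of $P_n(D)$), and seek a nonzero $v$ with $\phi(v)=-v$ when $p=2$, or more generally with $(1+\phi+\cdots+\phi^{p-1})v=0$; such $v$ exists as soon as $\phi\neq\mathrm{id}$, because $v\in\mathrm{im}(\phi-\mathrm{id})$ then works, and it provides a candidate for the class of $g$. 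The delicate step is to lift $v$ to an actual pure braid $g$ for which the product $\prod_{i=0}^{p-1}\beta^i g\beta^{-i}$ is genuinely trivial in $P_n(D)$, not merely in the abelianization. I expect this is where the work concentrates, and that the route is to choose $\beta$ and $g$ supported on as few strands as possible — typically three — so that the identity to be checked takes place in a copy of $B_3(D)$ inside $B_n(D)$ and follows from the braid relation; having produced such a $g$, the element $g$ of $H$ is conjugate within $H$ to its inverse (in the $p=2$ case) or is a generalized torsion element of $H$, and in either case $H$ cannot be bi-orderable.
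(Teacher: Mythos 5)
Your reduction to the subgroups $H_{\beta}=\langle P_n(D),\beta\rangle$ with $\pi(\beta)$ of prime order, normalized up to conjugacy by cycle type, matches the paper's skeleton, and the periodic-braid half of your argument is essentially sound (modulo the unproved, though true, assertion that a non-central root of $\Delta_n^2$ fails to commute with some pure braid; one can see this from the faithfulness of the $S_n$-action on $2$-subsets via the abelianization of $P_n(D)$). The gap is in the ``remaining cases,'' which are not a fringe: they include the most basic one, $\pi(\beta)$ a single transposition in $B_n(D)$ for $n\geq 4$, and every cycle type with at least two fixed points. There your plan is not merely delicate but, in the form you propose, impossible. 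You want a nontrivial pure braid $g$ supported on three strands with $\beta g\beta^{-1}=g^{-1}$; take $\beta=\sigma_1$ and $g\in P_3(D)$. Conjugating the relation by $\sigma_1$ once more gives $\sigma_1^2g\sigma_1^{-2}=g$, so $g$ centralizes $A_{1,2}$. Writing $P_3(D)\cong F(A_{1,3},A_{2,3})\times\langle\Delta_3^2\rangle$ with $A_{1,2}=\Delta_3^2(A_{1,3}A_{2,3})^{-1}$, the centralizer of $A_{1,2}$ is $\langle A_{1,3}A_{2,3}\rangle\times\langle\Delta_3^2\rangle$, and conjugation by $\sigma_1$ fixes both generators; hence $g=g^{-1}$, i.e.\ $g=1$. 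So no such $g$ exists on three strands, and you offer no construction on more strands, nor is the lifting problem from $P_n(D)^{\mathrm{ab}}$ addressed.

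The fix, and the point where the paper's route genuinely diverges from yours, is to use roots of \emph{local} full twists rather than of the global $\Delta_n^2$; these exist for every nontrivial cycle type, which is exactly what your periodic-braid argument lacks. For a transposition, $(\sigma_1\sigma_2^2)^2=(\sigma_1\sigma_2\sigma_1)(\sigma_2\sigma_1\sigma_2)=(\sigma_2^2\sigma_1)^2$ exhibits two distinct square roots of $\Delta_3^2$ inside $B_3(D)\subset B_n(D)$, both lying in $H_{\sigma_1}$ because $\sigma_2^2=A_{2,3}$ is pure; no centrality in $B_n(D)$ is needed, since unique roots is the statement $x^2=y^2\Rightarrow x=y$. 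For a product of $m\geq 2$ disjoint transpositions one takes $\Delta_{2m}$ and uses $(A_{1,2}\Delta_{2m}A_{1,2}^{-1})^2=\Delta_{2m}^2$ together with $[A_{1,2},\Delta_{2m}]\neq 1$; for a cycle type containing a cycle of length $j\geq 3$ one takes a block representative $\alpha$ whose first factor is $\sigma_1\cdots\sigma_{j-1}$, a $j$-th root of the central element $\Delta_j^2$ of $B_j(D)$, and conjugates by $A_{1,2}$ to produce a second $j$-th root of $\alpha^j$ in $H_{\alpha}$. Keeping your reduction but replacing ``roots of $\Delta_n^2$'' by ``roots of $\Delta_j^2$ for a suitable block of $j$ strands'' closes the gap.
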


\begin{theo}\label{teotoro}
Let $n \geq 3$, and let $M \neq S^2, \mathbb{R}P^2$ be a compact, connected surface. Let $H$ be an intermediate subgroup of $B_n(M)$. Then, $H$ is not bi-orderable.
\end{theo}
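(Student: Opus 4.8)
The plan is to reduce the surface case to the already-established disk case (Theorem \ref{theo:teorema1}) via the embedding $B_n(D) \hookrightarrow B_n(M)$ coming from the inclusion of a topological disk $D \subset M$. Let $\iota\colon B_n(D) \hookrightarrow B_n(M)$ be this embedding; it is known (\cite{Go, PR}) that $\iota$ restricts to an embedding $P_n(D) \hookrightarrow P_n(M)$ and that $\iota^{-1}(P_n(M)) = P_n(D)$, i.e. a braid on strands confined to the disk is pure in $B_n(M)$ if and only if it is already pure in $B_n(D)$. Suppose now, for contradiction, that $H$ is an intermediate subgroup of $B_n(M)$ — so $P_n(M) \subsetneq H \subset B_n(M)$ — and that $H$ is bi-orderable.

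\medskip

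\noindent First I would set $H_D := \iota^{-1}(H) = \{ \beta \in B_n(D) : \iota(\beta) \in H \}$. Since $\iota$ is injective, $H_D$ is isomorphic to a subgroup of $H$, hence bi-orderable (bi-orderability passes to subgroups). Because $P_n(D) = \iota^{-1}(P_n(M)) \subset \iota^{-1}(H) = H_D$ and $H_D \subset B_n(D)$, the subgroup $H_D$ satisfies $P_n(D) \subset H_D \subset B_n(D)$. By Theorem \ref{theo:teorema1}, a bi-orderable subgroup of $B_n(D)$ sandwiched between $P_n(D)$ and $B_n(D)$ must equal $P_n(D)$; therefore $H_D = P_n(D)$, which says precisely that $\iota(B_n(D)) \cap H = \iota(P_n(D))$.

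\medskip

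\noindent The remaining — and main — step is to produce, from the strict inclusion $P_n(M) \subsetneq H$, an element of $H$ that lies in $\iota(B_n(D)) \setminus \iota(P_n(D))$, contradicting $H_D = P_n(D)$. The idea is as follows. Pick $h \in H \setminus P_n(M)$; its image under the canonical projection $B_n(M) \to S_n$ is a nontrivial permutation $\tau$. Using the explicit generators of $B_n(M)$ (the Artin-type generators $\sigma_1,\dots,\sigma_{n-1}$ supported in the disk, together with the surface generators), one can find a braid $\delta \in \iota(B_n(D))$ whose underlying permutation equals $\tau$; moreover one can arrange $\delta$ to be, say, a product of the $\sigma_i$ realizing $\tau$ in a fixed standard way, so that $\delta \notin \iota(P_n(D))$. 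The difficulty is that $h$ and $\delta$ need not differ by a \emph{pure surface} braid in a way that keeps us inside $H$: a priori $h\delta^{-1} \in P_n(M)$ but this need not lie in $\iota(P_n(D))$, so $\delta = (h\delta^{-1})^{-1}h$ need not lie in $H$ unless $P_n(M) \subset H$ — but that is exactly our hypothesis. Since $P_n(M) \subset H$ and $h \in H$, we get $\delta = (h\delta^{-1})^{-1} h \in H$; and $\delta \in \iota(B_n(D))$ while $\delta \notin \iota(P_n(D))$ because its permutation $\tau$ is nontrivial. Hence $\delta \in H_D \setminus P_n(D)$, contradicting $H_D = P_n(D)$.

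\medskip

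\noindent The main obstacle I anticipate is the careful verification that the standard disk-supported braid $\delta$ realizing a given permutation is genuinely the image of a disk braid \emph{and} that $h\delta^{-1}$ lands in $P_n(M)$ (clear from the permutation bookkeeping) \emph{together with} the fact that $\iota^{-1}(P_n(M)) = P_n(D)$ so that non-pureness of $\delta$ in $B_n(D)$ is detected correctly — all of which rest on the precise statement of the embedding results of \cite{Go, PR}. Once these compatibility facts are in hand, the argument is a short diagram chase; the substantive content has been entirely absorbed into Theorem \ref{theo:teorema1}.
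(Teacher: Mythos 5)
Your argument is correct and is essentially the paper's: both proofs pick $h\in H\setminus P_n(M)$, replace it by a disk braid $\delta\in \iota(B_n(D))$ with the same (nontrivial) permutation, observe that $h\delta^{-1}\in P_n(M)\subset H$ forces $\delta\in H$, and then invoke Theorem \ref{theo:teorema1} on the resulting intermediate disk subgroup. The only difference is packaging — you phrase it as a contradiction with $\iota^{-1}(H)=P_n(D)$, while the paper directly exhibits $\langle P_n(D),\alpha\rangle\subset H$ as a non-bi-orderable subgroup — and the compatibility facts you flag ($\pi_M\circ\iota=\pi_D$, hence $\iota^{-1}(P_n(M))=P_n(D)$) are exactly the content of Remark \ref{rem.2}.
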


Besides the Introduction, this paper consists of two sections. In Section \ref{sec:2}, we give preliminary definitions and we state some well-known results on surface braid groups and orderable groups. The goal of Section~\ref{section:2}, which consists of four subsections, is to investigate the bi-orderability of a certain family of intermediate subgroups of the Artin braid groups, and to prove Theorems \ref{theo:teorema1} and \ref{teotoro}. We finish the paper with some consequences of Theorem~\ref{theo:teorema1}, such as the fact that no intermediate subgroups of the braid group $B_{\infty}(D)$ of the disk on infinitely many strands can be bi-ordered.

\section{Preliminaries}\label{sec:2}

In this section, we recall some results on the braid groups of the disk, such as the classical presentation due to Artin, the description of the center and the structure of the pure braid groups. We also summarize some properties of orderable groups, and we discuss the relation between the unique root property and the existence of generalized torsion elements in a group.

\subsection{Artin braid groups}

\hspace{4mm} Let $M$ be a compact, connected surface, and let $n \geq 1$. The canonical projection $F_n(M) \to C_n(M)$ is a regular $n!$-fold covering map, which leads to the following short exact sequence:
\begin{equation}\label{eq:shorexactsequence}
1 \longrightarrow P_n(M) \longrightarrow B_n(M) \xlongrightarrow[ ]{\pi} S_n \longrightarrow 1,
\end{equation}
where $\pi \colon B_n(M) \to S_n$ associates the permutation corresponding to the endpoints of the strands of a braid. The group homomorphism $\pi$ is called the \textit{permutation homomorphism}.

When $M$ is the disk $D$, Artin \cite{A1, A2} studied geometric and algebraic properties of its braid groups, and provided presentations for both $B_n(D)$ and $P_n(D)$.

\begin{theo}[Artin, \cite{A1, A2}]\label{theo:presentationBn}
For $n \geq 1$, $B_n(D)$ has the following presentation:
\begin{equation}\label{eq:artin}
 \Biggl\langle	\sigma_1, \dots, \sigma_{n-1} \ \Biggl| \ \begin{array}{l} \sigma_i \sigma_j = \sigma_j \sigma_i, \ \text{if} \ |i-j| \geq 2 \\ \sigma_i \sigma_{i+1} \sigma_i = \sigma_{i+1} \sigma_i \sigma_{i+1}, \ \text{for} \ i = 1,2, \dots, n-2 \end{array} \Biggl\rangle.	
\end{equation}
\end{theo}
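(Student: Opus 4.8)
The plan is to realize $B_n(D)=\pi_1(C_n(D))$ geometrically and to prove the presentation in the usual three moves: the listed elements generate, the listed relations hold, and there are no others. I would identify $B_n(D)$ with the group of geometric braids on $n$ strands in $D\times[0,1]$ up to ambient isotopy, with $\sigma_i$ the elementary braid that exchanges the $i$-th and $(i+1)$-st endpoints by a single positive crossing with all other strands kept vertical. Putting an arbitrary braid in general position with respect to the height function and to a fixed planar projection, one reads it off as a word in the $\sigma_i^{\pm1}$ — each critical level of the projection contributes one letter — so these elements generate; hence there is a surjection $\varphi\colon A_n\twoheadrightarrow B_n(D)$ from the group $A_n$ defined abstractly by the presentation \eqref{eq:artin}. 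That $\varphi$ is well defined follows from inspecting the relevant braid diagrams: disjointly supported braids commute, giving $\sigma_i\sigma_j=\sigma_j\sigma_i$ for $|i-j|\ge2$, and the braid relation $\sigma_i\sigma_{i+1}\sigma_i=\sigma_{i+1}\sigma_i\sigma_{i+1}$ is the standard ``Reidemeister III'' isotopy.

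The substantive point is the injectivity of $\varphi$, which I would prove by induction on $n$ using the Fadell--Neuwirth fibrations. For $n\le2$ the group $A_n$ is trivial or infinite cyclic and the claim is immediate. For the inductive step, forgetting the last point gives the split short exact sequence
\[
1\longrightarrow\pi_1\bigl(D\setminus\{q_1,\dots,q_{n-1}\}\bigr)\longrightarrow P_n(D)\longrightarrow P_{n-1}(D)\longrightarrow1,
\]
whose kernel is free of rank $n-1$; iterating and invoking the induction hypothesis yields the classical ``pure braid'' presentation of $P_n(D)$ on the generators $A_{ij}$. Feeding this, together with the Coxeter presentation of $S_n$ and a set-theoretic section of \eqref{eq:shorexactsequence} sending the transposition $s_i$ to $\sigma_i$, into the standard lemma that assembles a presentation of a group extension from presentations of the kernel and the quotient, one obtains a presentation of $B_n(D)$ on the $\sigma_i$; a sequence of Tietze transformations then expresses the $A_{ij}$ in terms of the $\sigma_i$, eliminates the redundant relations, and leaves exactly \eqref{eq:artin}. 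Since $A_n$ and $B_n(D)$ then carry the same presentation and $\varphi$ respects it, $\varphi$ is an isomorphism.

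The main obstacle is the bookkeeping in that last step: one must control how a lift of each Coxeter generator conjugates the free kernel $\pi_1(D\setminus\{q_1,\dots,q_{n-1}\})\cong F_{n-1}$ — equivalently, compute the Artin action of $\sigma_i$ on the $A_{jk}$ — and then check by hand that every resulting ``mixed'' relation is a consequence of far commutativity and the braid relation. An alternative route is to use the faithfulness of the Artin representation $B_n(D)\hookrightarrow\operatorname{Aut}(F_n)$: it would then suffice to show that $A_n$ acts on $F_n$ via the standard automorphisms and that this action is faithful. That, however, requires proving faithfulness of the representation, which is of comparable difficulty, so either way the combinatorial heart of Artin's theorem seems unavoidable.
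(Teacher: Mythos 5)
The paper does not prove this statement: Theorem~\ref{theo:presentationBn} is Artin's classical presentation theorem, quoted from \cite{A1, A2} as background, so there is no in-paper argument to compare against. Your outline follows the standard modern route (geometric generation, verification of the relations by isotopy, and injectivity of $A_n\twoheadrightarrow B_n(D)$ via the Fadell--Neuwirth fibrations together with the presentation-of-an-extension lemma), and all of the ingredients you name are the right ones.

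Two points keep this from being a proof rather than a plan. First, the induction is set up on the wrong object: your hypothesis is that $A_{n-1}\to B_{n-1}(D)$ is an isomorphism, but the inductive step invokes the pure braid presentation of $P_{n-1}(D)$ on the generators $A_{i,j}$. Passing from the Artin presentation of $B_{n-1}(D)$ to a presentation of the index-$(n-1)!$ subgroup $P_{n-1}(D)$ is itself a nontrivial Reidemeister--Schreier computation; the clean fix is to run a separate induction establishing the pure braid presentation directly from the split exact sequences $1\to F_{n-1}\to P_n(D)\to P_{n-1}(D)\to 1$, and only then perform the extension step over $S_n$ once, at the top. Second, and more substantially, the step you yourself flag as ``the combinatorial heart'' --- computing the conjugation action of the lifts $\sigma_i$ on the free kernel (equivalently the Artin action on the $A_{j,k}$), writing down all the resulting mixed relations, and showing by Tietze transformations that they collapse to exactly the two families in \eqref{eq:artin} --- is asserted, not performed. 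That verification is where essentially all of the content of Artin's theorem lives, so as written the proposal is a correct and well-informed roadmap with the decisive computation left open.
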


The generators $\sigma_i$ are called \textit{Artin generators}. They may be interpreted geometrically as a clockwise half-twist of the $i$-th and $(i + 1)$-th strands, as shown in Figure \ref{d5}, where the $i$-th strand is colored in red and the $(i+1)$-th strand is in green.
\begin{figure}[htbp]
\centering
\includegraphics[scale=0.2]{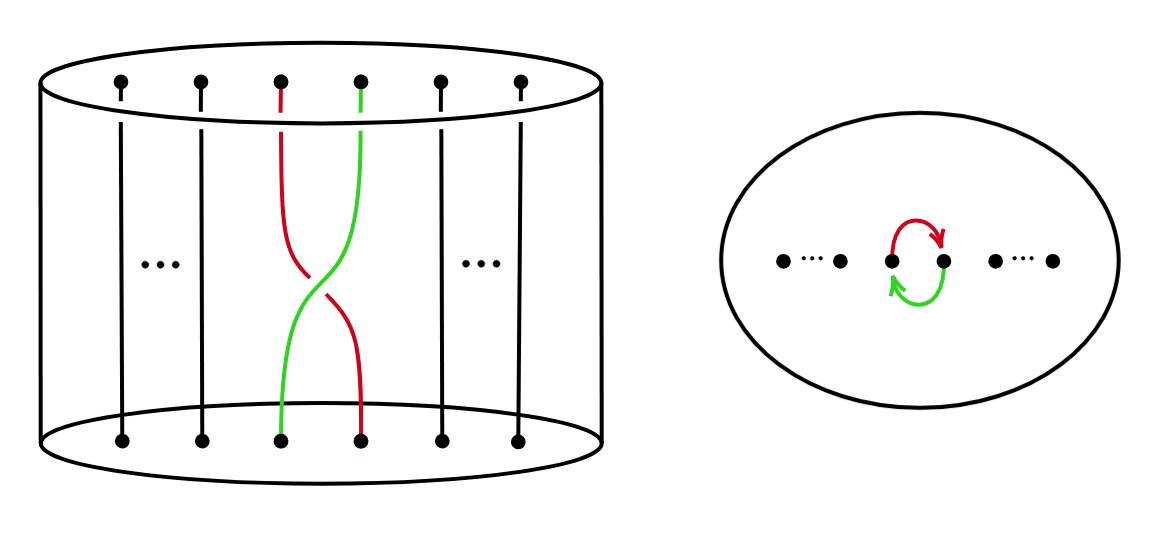}
\caption{The braid $\sigma_i \in B_n(D)$ illustrated geometrically and as an element of $\pi_1(C_n(D))$.}\label{d5}
\end{figure}
\begin{remark}\label{remarkinclusion1}
Let $n \geq 1$. The braid group $B_i(D)$ may be viewed as a subgroup of $B_n(D)$ for all $1 \leq i \leq n$. Indeed, it follows directly from the fact that the homomorphism $\iota \colon B_i(D) \to B_{i+1}(D)$ defined by $\iota (\sigma_k) = \sigma_k$ for all $1 \leq k \leq n-1$ is injective for all $i \geq 1$ \cite[Corollary~1.14]{KasselTuraev2008}.  
\end{remark}

We recall that the center of $B_n(D)$ is infinite cyclic \cite{Chow}. 

\begin{prop}[Chow, \cite{Chow}]\label{rem:propriedadeshalftwist}
For $n \geq 3$, the center of $B_n(D)$ is the cyclic subgroup $\langle \Delta_n^2 \rangle$. The braid $\Delta_n^2$ is called the \textit{full-twist}, and it satisfies $\Delta_n^2 = (\sigma_1 \cdots \sigma_{n-1})^n$. 
\end{prop}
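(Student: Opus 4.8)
The statement has two halves: that $\Delta_n^2$ is central and equals $(\sigma_1\cdots\sigma_{n-1})^n$, and that $Z(B_n(D))$ is no larger than $\langle\Delta_n^2\rangle$. The plan is to settle the first half by a direct manipulation of the braid relations, to reduce the second half to a statement about $P_n(D)$ via the permutation homomorphism, and to prove that statement by an induction based on the Fadell--Neuwirth fibration.

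For the first half, I would take Garside's half-twist $\Delta_n = \sigma_1(\sigma_2\sigma_1)(\sigma_3\sigma_2\sigma_1)\cdots(\sigma_{n-1}\cdots\sigma_1)$ and verify, by induction on $n$ using only the relations of Theorem~\ref{theo:presentationBn}, the conjugation formula $\Delta_n\sigma_i\Delta_n^{-1}=\sigma_{n-i}$ for $1\le i\le n-1$. Squaring yields $\Delta_n^2\sigma_i\Delta_n^{-2}=\sigma_i$ for all $i$, so $\Delta_n^2\in Z(B_n(D))$. The identity $\Delta_n^2=(\sigma_1\cdots\sigma_{n-1})^n$ is again a finite computation with the braid relations, conveniently organized around the auxiliary identity $(\sigma_1\cdots\sigma_{n-1})\sigma_i(\sigma_1\cdots\sigma_{n-1})^{-1}=\sigma_{i+1}$ for $i\le n-2$. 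Note also that $\Delta_n^2$ induces the trivial permutation, hence lies in $P_n(D)$.

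For the reverse inclusion, let $z\in Z(B_n(D))$. Applying the permutation homomorphism $\pi$ of~\eqref{eq:shorexactsequence}, we see $\pi(z)\in Z(S_n)$, which is trivial since $n\ge 3$; thus $z\in\ker\pi=P_n(D)$, and it remains to prove $Z(P_n(D))=\langle\Delta_n^2\rangle$. I would do this by induction on $n$ using the Fadell--Neuwirth short exact sequence
\[
1\longrightarrow F_{n-1}\longrightarrow P_n(D)\xlongrightarrow[\ ]{\,p\,}P_{n-1}(D)\longrightarrow 1
\]
arising from the map that forgets the last strand, where $F_{n-1}$ is a free group of rank $n-1$, the fundamental group of the fiber (an $(n-1)$-punctured disk). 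The base case $n=2$ is immediate: $P_2(D)\cong\mathbb{Z}=\langle\sigma_1^2\rangle=\langle\Delta_2^2\rangle$. For the inductive step, take $z\in Z(P_n(D))$. Surjectivity of $p$ makes $p(z)$ central in $P_{n-1}(D)$, so $p(z)\in\langle\Delta_{n-1}^2\rangle$ by the inductive hypothesis, say $p(z)=\Delta_{n-1}^{2k}$. Since forgetting a strand carries the full twist on $n$ strands to the full twist on $n-1$ strands, $p(\Delta_n^2)=\Delta_{n-1}^2$, so $z':=z\,\Delta_n^{-2k}$ lies in $\ker p=F_{n-1}$ and is still central in $P_n(D)$. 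In particular $z'$ commutes with every element of $F_{n-1}$; as $n-1\ge 2$, the free group $F_{n-1}$ has trivial center, forcing $z'=1$ and hence $z=\Delta_n^{2k}$. Combined with the reduction above, this gives $Z(B_n(D))=\langle\Delta_n^2\rangle$.

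I do not expect a single hard obstacle; the content lies in assembling the right standard inputs and in a few identifications that must be set up carefully — that the Fadell--Neuwirth sequence is exact with free kernel, that $\Delta_n^2\in P_n(D)$ with $p(\Delta_n^2)=\Delta_{n-1}^2$, and, as the decisive point, that a free group of rank at least $2$ has trivial center, which is exactly what makes the induction close. A more computational alternative for $Z(P_n(D))=\langle\Delta_n^2\rangle$ runs through the faithful Artin action $B_n(D)\hookrightarrow\operatorname{Aut}(F_n)$ on the free group on $x_1,\dots,x_n$: a braid automorphism fixes $x_1\cdots x_n$ and sends each $x_i$ to a conjugate of $x_{\pi(i)}$, and imposing commutation with every $\sigma_i$ pins the conjugating words down to powers of the $x_i$, which forces the braid into $\langle\Delta_n^2\rangle$. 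This route works but is heavier than the fibration argument.
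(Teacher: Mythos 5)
The paper does not prove this proposition at all: it is quoted as a classical result of Chow, with the only ingredient actually used later being the Garside conjugation identity $\sigma_i\Delta_k=\Delta_k\sigma_{k-i}$ (Lemma~\ref{rem:propriedadeshalftwist1}) and the centrality of $\Delta_n^2$. Your argument is a correct, self-contained proof along the standard lines. The first half (centrality of $\Delta_n^2$ via $\Delta_n\sigma_i\Delta_n^{-1}=\sigma_{n-i}$, and the identity $\Delta_n^2=(\sigma_1\cdots\sigma_{n-1})^n$) is exactly the Garside computation the paper itself cites. The reduction $Z(B_n(D))\subset P_n(D)$ via $Z(S_n)=1$ for $n\ge 3$ is right, and the induction on $Z(P_n(D))$ through the Fadell--Neuwirth sequence closes correctly because the kernel is free of rank $n-1\ge 2$, hence centerless. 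The only assertions you lean on without justification are standard but worth a line each in a written-up version: that the Fadell--Neuwirth sequence is exact on the left (this uses asphericity of $F_{n-1}(D)$, so that $\pi_2$ of the base vanishes), and that $p(\Delta_n^2)=\Delta_{n-1}^2$ (immediate geometrically, or from $\Delta_n^2=\prod_{i<j}A_{i,j}$ in a suitable order together with the fact that forgetting the last strand kills exactly the generators $A_{i,n}$). Neither is a gap.
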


\begin{remark}\label{rem:6}
Let $n \geq 3$ and $2 \leq k \leq n$. The braid $\Delta_k = \sigma_1 (\sigma_2 \sigma_1) \cdots (\sigma_{k-1} \cdots \sigma_1)$ of $B_k(D)$ is called the \textit{half-twist} on $k$ strands, and its square is equal to the full-twist. We recall that:
\begin{equation*}
\pi(\Delta_k) = \begin{cases} (1,k)(2, k-1) \cdots (\frac{k}{2}, \frac{k}{2} +1), & \text{if} \ k \ \text{is even}; \\ (1,k)(2,k-1) \cdots (\frac{k-1}{2}, \frac{k+3}{2}), & \text{if} \ k \ \text{is odd}. \end{cases} 
\end{equation*}
That is:
\begin{equation}\label{eq:set9}
\pi(\Delta_k) = \displaystyle\prod_{l=1}^{\lfloor \frac{k}{2} \rfloor} (l, (k+1)-l),
\end{equation}
a product of disjoint transpositions, where $\lfloor \frac{k}{2} \rfloor$ denotes the greatest integer less than or equal to $\frac{k}{2}$.
\end{remark}

We now give a preliminary result, which will be used in the calculations of Subsection~\ref{subsec:cis}.

\begin{lemma}[Garside, \cite{G}]\label{rem:propriedadeshalftwist1}
Let $1 \leq i \leq k-1$, and let $2 \leq k \leq n$. Then, $\sigma_i \Delta_k = \Delta_k \sigma_{k-i}$.  
\end{lemma}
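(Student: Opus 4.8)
The plan is to prove the identity $\sigma_i \Delta_k = \Delta_k \sigma_{k-i}$ by induction on $k$. The base case $k = 2$ is immediate: here $\Delta_2 = \sigma_1$, the only admissible index is $i = 1$, and both sides equal $\sigma_1^2 = \Delta_2 \sigma_{2-1}$. For the inductive step I would exploit two factorisations of the half-twist that follow from the defining word $\Delta_k = \sigma_1 (\sigma_2 \sigma_1) \cdots (\sigma_{k-1} \cdots \sigma_1)$. Writing $\epsilon_k = \sigma_1 \sigma_2 \cdots \sigma_{k-1}$ and $\delta_k = \sigma_{k-1} \sigma_{k-2} \cdots \sigma_1$, one has
\begin{equation*}
\Delta_k = \epsilon_k \, \Delta_{k-1} = \Delta_{k-1} \, \delta_k ,
\end{equation*}
where $\Delta_{k-1}$ is the half-twist on the first $k-1$ strands, a word in $\sigma_1, \dots, \sigma_{k-2}$. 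The second equality is visible directly from the defining word (split off the last block $\sigma_{k-1} \cdots \sigma_1$), and the first is obtained from it by repeated use of the far-commutation relations.

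Alongside these I would record two elementary ``shift'' relations inside the Artin presentation, namely
\begin{equation*}
\sigma_i \, \epsilon_k = \epsilon_k \, \sigma_{i-1} \quad (2 \leq i \leq k-1), \qquad \sigma_j \, \delta_k = \delta_k \, \sigma_{j+1} \quad (1 \leq j \leq k-2) .
\end{equation*}
Each of these is a one-line computation: push $\sigma_i$ (respectively $\sigma_j$) rightwards past the generators with which it commutes until it meets its neighbour, apply a single braid relation of the form $\sigma_a \sigma_{a \pm 1} \sigma_a = \sigma_{a \pm 1} \sigma_a \sigma_{a \pm 1}$, and then push the newly produced generator out to the far end past the generators with which it now commutes.

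With these facts in place the inductive step is a short case analysis. For $2 \leq i \leq k-1$ I would write $\sigma_i \Delta_k = \sigma_i \epsilon_k \Delta_{k-1} = \epsilon_k \sigma_{i-1} \Delta_{k-1}$, apply the induction hypothesis to $\sigma_{i-1} \Delta_{k-1}$ (legitimate since $1 \leq i-1 \leq k-2$) to get $\epsilon_k \Delta_{k-1} \sigma_{(k-1)-(i-1)} = \Delta_k \sigma_{k-i}$. The single remaining value $i = 1$ is handled with the mirror factorisation: $\sigma_1 \Delta_k = \sigma_1 \Delta_{k-1} \delta_k = \Delta_{k-1} \sigma_{k-2} \delta_k$ by the induction hypothesis, and then $\sigma_{k-2} \delta_k = \delta_k \sigma_{k-1}$ by the shift relation, so $\sigma_1 \Delta_k = \Delta_{k-1} \delta_k \sigma_{k-1} = \Delta_k \sigma_{k-1}$. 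This exhausts all admissible indices.

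There is no genuine obstacle here — the statement is Garside's and the argument is purely combinatorial in the Artin presentation — so the only points requiring care are bookkeeping ones: checking that the two displayed factorisations of $\Delta_k$ really do follow from the defining word together with the far-commutation and braid relations, and remembering in the case split that the value $i = 1$ falls outside the range of the $\epsilon_k$-computation and must be treated separately via the factorisation $\Delta_k = \Delta_{k-1} \delta_k$.
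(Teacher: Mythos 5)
Your argument is correct and complete: the two factorisations $\Delta_k = \epsilon_k \Delta_{k-1} = \Delta_{k-1}\delta_k$ and the shift relations $\sigma_i\epsilon_k = \epsilon_k\sigma_{i-1}$, $\sigma_j\delta_k = \delta_k\sigma_{j+1}$ all hold as you describe, and the induction (with the separate treatment of $i=1$) closes without gaps. The paper itself gives no proof of this lemma --- it is quoted from Garside \cite{G} --- and your induction is essentially the classical argument Garside uses, so there is nothing to reconcile.
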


By (\ref{eq:shorexactsequence}), the pure braid group of the disk is a finite-index subgroup of $B_n(D)$. Moreover, $P_n(D)$ is residually torsion-free nilpotent \cite{FalkRandell1988}, and its center coincides with the center of $B_n(D)$ for all $n \geq 3$ \cite{Birman1974, Chow}. Also, $P_n(D)$ is isomorphic to a semi-direct product of free groups \cite{Artin1947}. 

\begin{theo}[Artin, \cite{A1, A2}]\label{theo:presentationPn}
Let $n \geq 1$. The subgroup $P_n(D)$ is finitely presented, and it is generated by the set $\{A_{i,j}; \ 1 \leq i < j \leq n\}$, where $A_{i,j} = (\sigma_{j-1} \cdots \sigma_{i+1}) \sigma_i^2 (\sigma_{i+1}^{-1} \cdots \sigma_{j-1}^{-1})$.
\end{theo}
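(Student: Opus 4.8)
The plan is to prove both assertions simultaneously by induction on $n$, using the Fadell--Neuwirth fibration. Forgetting the last point gives a locally trivial fibration
$$p \colon F_n(D) \to F_{n-1}(D), \qquad (z_1, \dots, z_n) \mapsto (z_1, \dots, z_{n-1}),$$
whose fibre over $(z_1, \dots, z_{n-1})$ is the punctured disk $D \setminus \{z_1, \dots, z_{n-1}\}$, homotopy equivalent to a wedge of $n-1$ circles and hence with free fundamental group of rank $n-1$. Since every configuration space of $D$ is aspherical, the homotopy long exact sequence of $p$ reduces to the split short exact sequence
$$1 \longrightarrow F_{n-1} \longrightarrow P_n(D) \xrightarrow{\; p_* \;} P_{n-1}(D) \longrightarrow 1,$$
a splitting being obtained by introducing the $n$-th point close to the boundary of $D$. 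Thus $P_n(D) \cong F_{n-1} \rtimes P_{n-1}(D)$.

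First I would pin down a free basis of the fibre subgroup $F_{n-1}$: it is given by the loops in which the $n$-th point circles once around the $i$-th puncture, for $1 \leq i \leq n-1$, and these loops are represented by the braids $A_{i,n} = (\sigma_{n-1} \cdots \sigma_{i+1}) \sigma_i^2 (\sigma_{i+1}^{-1} \cdots \sigma_{n-1}^{-1})$. Hence $F_{n-1} = \langle A_{1,n}, \dots, A_{n-1,n} \rangle$. For the induction, the base case $P_1(D)$ is trivial. Assuming that $P_{n-1}(D)$ is generated by $\{A_{i,j} : 1 \leq i < j \leq n-1\}$, the splitting shows that $P_n(D)$ is generated by these fibre generators together with the images under the section of a generating set of $P_{n-1}(D)$; the section carries the latter to the braids $A_{i,j}$ with $j \leq n-1$. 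Collecting both families gives exactly $\{A_{i,j} : 1 \leq i < j \leq n\}$, which establishes the generation statement.

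For finite presentation I would appeal to the standard fact that in a short exact sequence $1 \to N \to G \to Q \to 1$ with $N$ and $Q$ both finitely presented, the group $G$ is finitely presented. Free groups of finite rank are finitely presented, and $P_{n-1}(D)$ is finitely presented by the inductive hypothesis, so $P_n(D)$ inherits finite presentation from the sequence above. Equivalently, finite presentation is immediate from the Reidemeister--Schreier process, since $P_n(D)$ has finite index $n!$ in the finitely presented group $B_n(D)$ by the exact sequence (\ref{eq:shorexactsequence}) and Theorem~\ref{theo:presentationBn}.

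The step I expect to require the most care is the geometric identification of a free basis of the punctured fibre with the specific words $A_{i,n}$ in the Artin generators: one must verify that conjugating $\sigma_i^2$ by $\sigma_{n-1} \cdots \sigma_{i+1}$ drags the $n$-th strand across the intermediate strands so that it encircles exactly the $i$-th one. Beyond generation and finite presentation, writing down an explicit finite presentation would further require the conjugation action of $P_{n-1}(D)$ on the basis $\{A_{i,n}\}$, yielding the pure braid relations; but this computation is not needed for the assertions as stated.
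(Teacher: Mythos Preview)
Your argument is correct and is in fact one of the standard modern routes to Artin's result: the Fadell--Neuwirth fibration yields the split extension $1 \to F_{n-1} \to P_n(D) \to P_{n-1}(D) \to 1$, the fibre generators are the $A_{i,n}$, and induction together with either the extension argument or Reidemeister--Schreier gives finite presentation. You are also right that the delicate point is the geometric identification of the loop around the $i$-th puncture with the word $(\sigma_{n-1}\cdots\sigma_{i+1})\sigma_i^2(\sigma_{i+1}^{-1}\cdots\sigma_{n-1}^{-1})$.

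However, there is nothing to compare against: the paper does not supply a proof of this theorem. It is stated as a classical result of Artin with a citation to \cite{A1, A2}, and the paper moves on immediately. So your proposal is not an alternative to the paper's argument but rather a self-contained justification of a background fact the paper takes for granted. (For what it is worth, Artin's original 1925 proof proceeds combinatorially via what is essentially the Reidemeister--Schreier method applied to the finite-index inclusion $P_n(D) \subset B_n(D)$, closer to your second remark than to the fibration approach.)
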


Let $n \geq 3$, and $1 \leq i \leq n$. Let
\begin{equation*}
\begin{array}{cccl}
\varphi_i \colon & F_n(D) & \longrightarrow & F_{n-1}(D) \\
& (x_1, \dots, x_n) & \longmapsto & (x_1, \dots, x_{i-1}, x_{i+1}, \dots, x_n)
\end{array}
\end{equation*}
be the projection that omits the $i$-th coordinate of an element $(x_1, \dots, x_n) \in F_n(D)$. Then, the map $\varphi$ is a locally-trivial fibration \cite{FN}, and so it induces an epimorphism $(\varphi_i)_{\#} \colon P_n(D) \to P_{n-1}(D)$, which may be interpreted geometrically as the retraction that forgets the $i$-th strand of a given pure braid. 

\begin{remark}\label{rem.1}
For $n=3$, we have $(\varphi_1)_{\#} (A_{1,3}) = (\varphi_1)_{\#} (A_{2,3}) = A_{1,2} \neq 1 = (\varphi_1)_{\#} (A_{1,2})$, and $(\varphi_2)_{\#} (A_{1,3}) = A_{1,2} \neq 1 = (\varphi_1)_{\#} (A_{2,3})$.
\end{remark}

We now recall the fact that the inclusion of a subsurface of a surface into the surface gives rise to an embedding between their braid groups \cite{PR}. We shall use this result in the case where the subsurface is a disk.

\begin{prop}[Paris and Rolfsen, Proposition 2.2 \cite{PR}]\label{prop2.2}
Let $M$ be a connected surface different from $S^2$ and $\mathbb{R}P^2$, and let $N$ be a connected subsurface of $M$ such that none of the connected components of the closure $\overline{M \setminus N}$ is a disk. Let $m,n \geq 1$, and let $\psi \colon B_n(N) \to B_m(M)$ be the homomorphism induced by the natural embedding $N \hookrightarrow M$. Then $\psi$ is injective.
\end{prop}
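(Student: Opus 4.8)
The plan is to reduce the statement to a comparison of fundamental groups of configuration spaces, using the fact that a suitable inclusion of configuration spaces is itself a fibration-type map with trivial effect on the relevant homotopy. First I would fix basepoints compatibly: choose the $n$ (respectively $m$) base points of the braids inside a small disk contained in $N$, so that the inclusion $N \hookrightarrow M$ sends $C_n(N)$ into $C_m(M)$ (when $n=m$) and the induced map on $\pi_1$ is exactly the homomorphism $\psi$ of the statement. For the case $m = n$, the core of the argument is to show that the inclusion $C_n(N) \hookrightarrow C_n(M)$ induces an injection on $\pi_1$; for $m \neq n$ one composes with the standard strand-stabilisation embeddings $B_n(N)\hookrightarrow B_m(N)$ (cf.\ Remark~\ref{remarkinclusion1}) and reduces to the equal-index case.

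The key step is to build a deformation retraction or, more precisely, to exploit a long exact sequence. Consider the surface $M$ as obtained from $N$ by gluing back the closure $\overline{M\setminus N}$ along boundary curves, with the hypothesis that no component of $\overline{M\setminus N}$ is a disk. I would use the locally trivial fibration, for a point $p$ in the interior of $N$,
\begin{equation*}
M \setminus \{p_1,\dots,p_{n-1}\} \hookrightarrow F_n(M) \longrightarrow F_{n-1}(M),
\end{equation*}
together with the analogous fibration for $N$, and compare the two via the inclusion of total spaces, bases, and fibres. The fibre inclusion is $N\setminus\{n-1 \text{ pts}\} \hookrightarrow M\setminus\{n-1 \text{ pts}\}$. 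The crucial topological input is that, because no complementary component is a disk, this fibre inclusion is $\pi_1$-injective: every essential loop in $N$ minus finitely many points remains essential in $M$ minus those points, since filling in a non-disk component never kills a primitive boundary-parallel or handle-carrying class. One then runs the five lemma / diagram chase on the two homotopy long exact sequences of the fibrations, inducting on $n$, with base case $n=1$ being precisely the $\pi_1$-injectivity of $N\hookrightarrow M$ under the no-disk hypothesis. Passing from the pure to the full braid groups is immediate from the commuting square relating the short exact sequences \eqref{eq:shorexactsequence} for $N$ and for $M$ with the common quotient $S_n$, together with the injectivity just established for the pure subgroups: a braid in the kernel of $\psi$ maps to the identity permutation, hence lies in $P_n(N)$, hence is trivial.

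I expect the main obstacle to be the verification of $\pi_1$-injectivity of the fibre inclusion $N \setminus \{\text{finitely many points}\} \hookrightarrow M \setminus \{\text{finitely many points}\}$, and more generally the careful bookkeeping of basepoints and of the inductive hypothesis so that the long-exact-sequence comparison genuinely applies (one must check that the relevant connecting maps commute with the inclusion-induced maps). The no-disk hypothesis on $\overline{M\setminus N}$ is exactly what is needed here and must be invoked at the right moment; without it a complementary disk could be capped off, turning an essential puncture-encircling loop into a nullhomotopic one and destroying injectivity. Once that point is settled, the remainder is a formal diagram chase.
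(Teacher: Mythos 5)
This proposition is not proved in the paper at all: it is quoted, with attribution, from Paris and Rolfsen \cite{PR}, so there is no in-paper argument to compare against. Your sketch essentially reconstructs the original Paris--Rolfsen proof: reduce to the pure braid groups via the commuting square of permutation homomorphisms, then induct on the number of strands using the Fadell--Neuwirth fibrations $M\setminus\{p_1,\dots,p_{n-1}\}\hookrightarrow F_n(M)\to F_{n-1}(M)$ and the corresponding sequence for $N$, with the no-disk hypothesis supplying $\pi_1$-injectivity of the fibre inclusion $N\setminus\{\text{pts}\}\hookrightarrow M\setminus\{\text{pts}\}$ (a standard incompressibility fact). That is the right skeleton, and your identification of where the no-disk hypothesis must be invoked is accurate.

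Two points need repair. First, the reduction of $m>n$ to $m=n$ cannot proceed by pre-composing with a stabilisation $B_n(N)\hookrightarrow B_m(N)$: the $m-n$ additional basepoints lie in $M\setminus N$, not in $N$, so no such factorisation exists. The correct move (after restricting to pure braids) is to post-compose $\psi\vert_{P_n(N)}$ with the strand-forgetting epimorphism $P_m(M)\to P_n(M)$ that deletes the $m-n$ constant strands; the composite is the equal-index map $P_n(N)\to P_n(M)$, so injectivity of the latter gives injectivity of the former. Second, your diagram chase silently uses that $\pi_1(M\setminus\{n-1\ \text{pts}\})\to P_n(M)$ is injective, i.e.\ that the connecting map from $\pi_2(F_{n-1}(M))$ vanishes. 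This is the Fadell--Neuwirth theorem that $\pi_2(F_k(M))=0$ for $M\neq S^2,\mathbb{R}P^2$, and it is precisely where that hypothesis enters; it is not a bookkeeping issue but a genuine ingredient, and without it the statement fails (for instance $B_n(D)\to B_n(S^2)$ is not injective). With these two corrections the argument is the standard one.
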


\begin{remark}\label{rem.2}
Taking $m=n$ and $N$ to be a topological disk $D$ in Proposition \ref{prop2.2}, we obtain $B_n(D) \cong \psi(B_n(D)) \subset B_n(M)$ for all $n \geq 1$ and $M \neq S^2, \mathbb{R}P^2$ a connected surface. Hence, $B_n(D)$ may be viewed as a subgroup of $B_n(M)$. Moreover, by the proof of Proposition \ref{prop2.2} \cite[Proposition 2.2, p. 424]{PR}, we have $P_n(D) \cong \psi(P_n(D)) \subset P_n(M)$, which was observed earlier by Goldberg \cite[Theorem 1]{Go}.
\end{remark}

\subsection{Ordered groups}

\hspace{4mm} In the literature, one may find authors who study left-orderings (eg. \cite{BRW, CR, Navas2010}) and others who study right-orderings (eg. \cite{Kopytov2013, Passman, RhemtullaRolfsen2002}). In this paper, we choose the convention of studying left-orderable groups. Recall that a left-orderable group $(G, <)$ is clearly right-orderable with the ordering $\prec$ defined by $ x \prec y \ \Leftrightarrow \ y^{-1} < x^{-1}$.

A well-known characterization of ordered groups, first established by Levi \cite{Levi1} in the context of bi-invariant orderings, is given in terms of the existence of a \textit{positive cone}. 

\begin{prop}[Levi, \cite{Levi1}]\label{prop:positivecone}
A group $G$ is left-orderable if and only if there exists a set $P \subset G$, known as the positive cone of the ordering, such that $P P \subset P$ and $G = P \mathop{\dot{\bigcup}} \{1\} \mathop{\dot{\bigcup}} P^{-1}$, where the union is disjoint. Moreover, $G$ is bi-orderable if and only if $P$ is a normal set, namely $P^g = gPg^{-1} \subset P$ for all $g \in G$.
\end{prop}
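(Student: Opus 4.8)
The plan is to prove both equivalences via the standard dictionary between left-invariant total orders and positive cones, so I would first fix the convention $x<y \iff x^{-1}y\in P$ (the one that produces \emph{left}-invariance, matching the paper's setup). For the first equivalence, given a left-ordering $<$ on $G$ I would set $P=\{g\in G : 1<g\}$ and verify the two required properties: $PP\subset P$ follows from left-invariance together with transitivity, since $1<a$ and $1<b$ give $a=a\cdot 1<ab$, hence $1<ab$; and the partition $G = P \mathop{\dot{\bigcup}} \{1\} \mathop{\dot{\bigcup}} P^{-1}$ is trichotomy for $<$, once one observes that $g<1$ is equivalent (multiply on the left by $g^{-1}$) to $1<g^{-1}$, i.e. to $g\in P^{-1}$; disjointness of the union is precisely the strictness of $<$. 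Conversely, given $P$ with those properties, I would define $<$ as above and check it is a strict total order: applying the partition of $G$ to the element $x^{-1}y$ shows that exactly one of $x<y$, $x=y$, $y<x$ holds (with $x^{-1}x=1\notin P$ giving irreflexivity), and $(x^{-1}y)(y^{-1}z)=x^{-1}z$ together with $PP\subset P$ gives transitivity. Left-invariance is immediate, since $(gx)^{-1}(gy)=x^{-1}y$.

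For the ``moreover'' part I would use the same correspondence. If $<$ is bi-invariant with positive cone $P$, then for any $g\in G$ and $p\in P$ we have $1<p$; left-multiplying by $g$ and then right-multiplying by $g^{-1}$ yields $1=gg^{-1}<gpg^{-1}$, so $P^g\subset P$, and applying this to $g^{-1}$ as well shows $P^g=P$ for every $g$, i.e. $P$ is a normal set. Conversely, if $P$ is normal, then for the order $x<y\iff x^{-1}y\in P$ constructed above, right-invariance amounts to $(xg)^{-1}(yg)=g^{-1}(x^{-1}y)g\in P$ whenever $x^{-1}y\in P$, which is exactly the inclusion $g^{-1}Pg=P^{g^{-1}}\subset P$; since this holds for all $g$, $<$ is bi-invariant.

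I expect no serious obstacle here: the statement is foundational and the proof is a routine verification consisting of one-line checks. The only points requiring care are (i) fixing the convention $x<y\iff x^{-1}y\in P$ rather than $yx^{-1}\in P$, so that the resulting invariance is on the correct side, and (ii) in the bi-order equivalence, invoking normality of $P$ in the right direction, namely $g^{-1}Pg\subset P$ for all $g$, equivalently $gPg^{-1}\subset P$ for all $g$, equivalently $gPg^{-1}=P$ for all $g$. I would state these conventions explicitly at the outset to avoid any side/sign confusion, after which the argument proceeds cleanly.
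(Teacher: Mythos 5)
Your proof is correct and is the standard dictionary between left-invariant total orders and positive cones via the convention $x<y \iff x^{-1}y\in P$; the paper itself states this proposition as a classical result of Levi and gives no proof, so there is nothing to compare against. Your care with the side conventions (left-invariance from $(gx)^{-1}(gy)=x^{-1}y$, and normality of $P$ supplying $g^{-1}Pg\subset P$ for right-invariance) is exactly where the only pitfalls lie, and you handle them correctly.
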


Left-orderable groups are torsion-free. Furthermore, bi-orderable groups have the \textit{unique root property} \cite[Lemma 2.1.4]{Glass1999}, and they do not have \textit{generalized torsion elements} \cite[Lemma 2.3]{MotegiTeragaito2017}, as we recall in Proposition \ref{lema:raizesunicas} $(i)$ and $(ii)$ respectively. We also recall that a non-trivial group element is said to be a \textit{generalized torsion element} if there exists a non-empty finite product of its conjugates that is equal to the identity.   

\begin{prop}\label{lema:raizesunicas}
Let $G$ be a bi-orderable group. Then, the following hold:

\begin{enumerate}[(i)]
\item Let $g,h \in G$. If there exists $n \geq 1$ such that $g^n = h^n$, then $g=h$.

\item If $g \in G \setminus \{1\}$, then the product $(x_1 g x_1^{-1})(x_2 g x_2^{-1}) \cdots (x_k g x_k^{-1})$ is non-trivial for all $x_1, \dots, x_k \in G$ and $k \geq 1$.
\end{enumerate} 
\end{prop}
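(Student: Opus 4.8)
The plan is to establish both statements by exploiting the positive cone $P$ of a bi-ordering on $G$ (Proposition~\ref{prop:positivecone}), using crucially that $P$ is a \emph{normal} subsemigroup, i.e.\ $gPg^{-1}\subset P$ for every $g\in G$. For part $(i)$, I would argue by trichotomy: suppose $g\neq h$, and without loss of generality assume $h^{-1}g\in P$, that is $g>h$ (the case $g<h$ is symmetric, and $g=h$ is excluded). The goal is to deduce $g^n>h^n$, contradicting $g^n=h^n$. One clean way is to write $g=hc$ with $c=h^{-1}g\in P$, and expand
\[
g^n=(hc)^n=h^n\cdot\bigl(h^{-(n-1)}ch^{n-1}\bigr)\bigl(h^{-(n-2)}ch^{n-2}\bigr)\cdots\bigl(h^{-1}ch\bigr)\,c,
\]
so that $h^{-n}g^n$ is a product of $n$ conjugates of $c\in P$; by normality each conjugate lies in $P$, hence the product lies in $P$, giving $h^{-n}g^n\in P$ and therefore $g^n\neq h^n$. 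This contradiction proves $g=h$.

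For part $(ii)$, the argument is even more direct and in fact isolates the same mechanism: let $g\neq 1$, and after replacing $g$ by $g^{-1}$ if necessary we may assume $g\in P$ (here we use that $G=P\,\dot\cup\,\{1\}\,\dot\cup\,P^{-1}$ and that replacing $g$ by $g^{-1}$ replaces the product $\prod_i x_igx_i^{-1}$ by the inverse of $\prod_i x_i g x_i^{-1}$ with the factors reversed, which is trivial iff the original is). Then each conjugate $x_igx_i^{-1}$ lies in $P$ by normality of the positive cone, and since $PP\subset P$ the product $(x_1gx_1^{-1})(x_2gx_2^{-1})\cdots(x_kgx_k^{-1})$ lies in $P$; as $1\notin P$, this product is non-trivial. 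Note this shows in particular that a bi-orderable group has no generalized torsion elements, which is the form in which we shall use it.

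There is no real obstacle here: both parts are routine consequences of the defining properties of a normal positive cone, and the only point requiring a moment's care is the reduction to the case $g\in P$ (resp.\ $h^{-1}g\in P$) via the trichotomy, together with the bookkeeping that inverting $g$ inverts the relevant product. I would present part $(ii)$ first, since it is the shorter computation, and then note that part $(i)$ follows from the same telescoping identity for $(hc)^n$; alternatively $(i)$ can be reduced to $(ii)$ by observing that if $g^n=h^n$ with $g>h$ then $c=h^{-1}g\neq 1$ and the displayed identity exhibits $1=h^{-n}h^n=h^{-n}g^n$ as a product of $n$ conjugates of $c$, contradicting $(ii)$. Either organization works; I would keep it brief and cite \cite[Lemma 2.1.4]{Glass1999} and \cite[Lemma 2.3]{MotegiTeragaito2017} for the original statements.
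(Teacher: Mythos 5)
Your proof is correct. Note, however, that the paper does not actually prove this proposition: it is stated as a recollection of known facts, with part $(i)$ attributed to \cite[Lemma 2.1.4]{Glass1999} and part $(ii)$ to \cite[Lemma 2.3]{MotegiTeragaito2017}, so there is no in-paper argument to compare yours against. Your positive-cone argument is the standard one and both halves check out: the telescoping identity $h^{-n}(hc)^n=\bigl(h^{-(n-1)}ch^{n-1}\bigr)\cdots\bigl(h^{-1}ch\bigr)c$ exhibits $h^{-n}g^n$ as a product of conjugates of $c=h^{-1}g\in P$, and normality of $P$ together with $PP\subset P$ does the rest; the reduction to $g\in P$ in part $(ii)$ via inverting and reversing the product is also handled correctly. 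Your closing observation that $(i)$ can be reduced to $(ii)$ (a counterexample to unique roots yields a product of conjugates equal to $1$) is consistent with, and in fact cleaner than, the paper's surrounding discussion, which invokes Proposition~\ref{prop:gt} (Naylor--Rolfsen) to relate non-unique roots to generalized torsion. One small presentational point: since the paper only ever uses the proposition through the positive-cone characterization of Proposition~\ref{prop:positivecone}, your self-contained proof would be a reasonable addition, but citing the two references as the paper does is equally legitimate.
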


By the following proposition, a group with non-unique roots also has generalized torsion elements \cite{NR}, and so $(i)$ implies $(ii)$ in Proposition \ref{lema:raizesunicas}.  

\begin{prop}[Naylor and Rolfsen, \cite{NR}]\label{prop:gt}
Let $p, q \in \mathbb{Z}$ be such that $|p|, |q| > 1$. The group
\begin{equation*}
G_{p,q} = \langle x,y; \ x^p = y^q \rangle
\end{equation*}
contains a generalized torsion element, namely the commutator $\left[x,y\right] = xyx^{-1}y^{-1}$.
\end{prop}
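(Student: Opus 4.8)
The plan is to exploit the fact that the element $z = x^p = y^q$ is central in $G_{p,q}$: being a power of $x$ it commutes with $x$, and being a power of $y$ it commutes with $y$, so $z$ lies in the center. The idea is then to exhibit an \emph{explicit} finite product of conjugates of $[x,y]$ which telescopes and collapses to a commutator of the form $[x,z]$; this is trivial by centrality of $z$. The whole argument thereby reduces to a short computation, together with the (easy) verification that $[x,y] \neq 1$.

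For the computation I would conjugate $[x,y]$ by the powers $1, y, y^2, \dots, y^{|q|-1}$. Writing $u = x y x^{-1}$, so that $[x,y] = u\,y^{-1}$, one has $y^k [x,y] y^{-k} = y^k u\, y^{-(k+1)}$, and therefore, after telescoping,
\begin{equation*}
\prod_{k=0}^{|q|-1} y^k [x,y] y^{-k} = u^{|q|}\, y^{-|q|} = \bigl(x\, y^{|q|}\, x^{-1}\bigr)\, y^{-|q|}.
\end{equation*}
Depending on the sign of $q$, the element $y^{|q|}$ equals either $z$ or $z^{-1}$, and is central in either case; hence $x\, y^{|q|}\, x^{-1} = y^{|q|}$ and the product displayed above equals $1$. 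Since $|q| > 1$, this is a genuine non-empty product of conjugates of $[x,y]$ that is trivial.

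It remains to check that $[x,y] \neq 1$ in $G_{p,q}$. For this I would pass to the quotient by the central subgroup $\langle z \rangle$: adding the relation $z = 1$ forces $x^{|p|} = 1$ and $y^{|q|} = 1$, so that $G_{p,q}/\langle z\rangle \cong \Z/|p|\Z \ast \Z/|q|\Z$, a non-abelian group since it is a free product of two non-trivial groups. As the images of $x$ and $y$ generate this quotient, their commutator cannot be trivial, and therefore $[x,y] \neq 1$ in $G_{p,q}$. Combining this with the previous paragraph shows that $[x,y]$ is a generalized torsion element.

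I do not anticipate any real obstacle: the only delicate points are the bookkeeping in the telescoping product and the sign of $q$, the latter handled uniformly by taking $|q|$ conjugates rather than $q$. The one load-bearing observation is the centrality of $x^p = y^q$, which is exactly what allows a product of conjugates of a commutator to be forced to vanish.
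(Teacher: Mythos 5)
Your proof is correct, and it is essentially the approach of the cited source and of the paper's illustrative example for $p=q=2$: one expands a commutator involving the central element $z=x^p=y^q$ into a non-empty product of conjugates of $[x,y]$ (you use $[x,y^{|q|}]=\prod_{k=0}^{|q|-1} y^k[x,y]y^{-k}$, the paper's example uses $[x^2,y^2]$), and one verifies $[x,y]\neq 1$ by passing to the quotient $G_{p,q}/\langle z\rangle\cong \Z/|p|\Z \ast \Z/|q|\Z$, which is non-abelian. The telescoping computation and the sign discussion are both accurate, so no gaps remain.
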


By Proposition \ref{prop:gt}, if there exist $x,y \in G$ such that $x^p = y^q$, for $|p|, |q| > 1$, and $x$ and $y$ do not commute, then $[x,y] \in G_{p,q} \subset G$ is a generalized torsion element of $G$, and therefore $G$ is not bi-orderable. We will use this result in Remark \ref{rem:1} to exhibit generalized torsion elements of intermediate subgroups of the braid groups of the disk.

\begin{exam}
With the notation of Proposition \ref{prop:gt}, let $p=q=2$, and let us write $x^y = yxy^{-1}$ for all $x,y \in G$. To obtain a generalized torsion relation satisfied by the element $[x,y]$ of the group $G_{2,2} = \langle x,y; \ x^2 = y^2 \rangle$, observe that:
\begin{equation*}
\left[x^2,y^2\right]  =  \left[x, y^2 \right]^x \left[x,y^2\right] =  \left( \left[x, y \right] \left[x, y \right]^y \right)^x \left[x,y\right] \left[x, y \right]^y = \left[x, y \right]^x \left[x, y \right]^{xy} \left[x,y\right] \left[x, y \right]^y. 
\end{equation*}
Then, since $x^2 = y^2$, we have:
\begin{equation}\label{eq:out17}
\left[x, y \right]^x \left[x, y \right]^{xy} \left[x,y\right] \left[x, y \right]^y = 1.
\end{equation}
\end{exam}

More details and properties of ordered groups may be found in \cite{CR, MuraRhemtulla1977, Passman}. We end this subsection with some comments on \textit{partial orderings}, for which we refer the reader to Fuchs's \cite{Fuchs2014} and Glass's \cite{Glass1999} books. A group $G$ is said to be \textit{partially left-orderable} if there exists a partial strict ordering $<$ on its elements that is left-invariant. If such an ordering is also right-invariant, then $(G, <)$ is said to be \textit{partially bi-orderable}. As in Proposition~\ref{prop:positivecone}, a group $G$ is partially left-orderable (resp. partially bi-orderable) if and only if there exists a semigroup $Q$ such that $Q \cap Q^{-1} = \emptyset$ (resp. and $Q$ is a normal set). In this case, the semigroup $Q$ is called a \textit{partial positive cone} of the group $G$. An example of a partially left-orderable group that cannot be left-ordered is the multiplicative group of all non-zero principal ideals in a commutative integral domain with unity, and an example of a partially bi-ordered group that is not bi-orderable is the additive group of all polynomial functions with real coefficients whose domain is the closed interval $[0,1]$ \cite[Section II.3]{Fuchs2014}.

\section{Bi-orderability of intermediate subgroups}\label{section:2}

\hspace{4mm} This section consists of four subsections. Although the braid groups of the disk are not bi-orderable, in Subsection \ref{subsec-pb}, we show that they are partially bi-ordered. In Subsection~\ref{subsec:cis}, we investigate some intermediate subgroups of $B_n(D)$. In Subsection \ref{section:3}, by studying the permutation of non-pure braids, we prove Theorem \ref{theo:teorema1}. Finally, in Subsection \ref{subsec:consequences}, we state some consequences of Theorem \ref{theo:teorema1} and give the proof of Theorem \ref{teotoro}. 

\subsection{Partial bi-ordering}\label{subsec-pb}

\hspace{4mm} In this section,  we prove that intermediate subgroups of $B_n(D)$ can be partially bi-ordered.

\begin{prop}\label{prop:ordemparcial}
Let $G$ be a group, and let $f \colon G \to N$ be a group homomorphism. If $\ \operatorname{Im}(f)$ is non-trivial and $N$ is partially bi-orderable, then $G$ is partially bi-orderable. 
\end{prop}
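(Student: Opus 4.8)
The plan is to produce a partial bi-ordering on $G$ by pulling back a partial bi-ordering on $N$ along $f$. First I would invoke the characterization of partial bi-orderability in terms of partial positive cones: since $N$ is partially bi-orderable, there exists a semigroup $Q \subset N$ with $Q \cap Q^{-1} = \emptyset$ and $Q$ normal in $N$. The natural candidate for a partial positive cone on $G$ is the preimage $P := f^{-1}(Q)$.

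Next I would verify that $P$ has the required properties. That $P$ is a semigroup is immediate: if $a, b \in P$, then $f(ab) = f(a)f(b) \in QQ \subset Q$, so $ab \in P$. For the emptiness condition $P \cap P^{-1} = \emptyset$, suppose $a \in P \cap P^{-1}$; then $f(a) \in Q$ and $f(a^{-1}) = f(a)^{-1} \in Q$, contradicting $Q \cap Q^{-1} = \emptyset$. For normality, if $a \in P$ and $g \in G$, then $f(gag^{-1}) = f(g)f(a)f(g)^{-1} \in f(g)\, Q\, f(g)^{-1} \subset Q$ since $Q$ is normal in $N$ (note $f(g) \in N$), hence $gag^{-1} \in P$. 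By the analogue of Proposition~\ref{prop:positivecone} for partial orderings stated in the previous subsection, $P$ defines a partial bi-ordering on $G$.

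The one genuine subtlety — and the reason the hypothesis $\operatorname{Im}(f)$ non-trivial is needed — is that a "partial positive cone" must be non-empty for the induced partial order to be non-trivial (otherwise one gets only the trivial "order" in which no two distinct elements are comparable, which is usually not admitted as a partial ordering of a group). Here I would need to know that $Q$ itself can be taken non-empty, and that $P = f^{-1}(Q)$ is non-empty. Since $N$ is a partially bi-orderable group admitting a non-trivial such structure, we may assume $Q \neq \emptyset$; pick $q \in Q$. Because $\operatorname{Im}(f)$ is non-trivial it is a non-trivial subgroup of $N$, and I would argue that the normal sub-semigroup $Q$ can be chosen to meet $\operatorname{Im}(f)$ nontrivially — for instance by replacing $Q$ with the sub-semigroup generated by all $G$-conjugates of some element of $Q \cap \operatorname{Im}(f)$, or more simply by observing that if $y \in \operatorname{Im}(f) \setminus \{1\}$ then one of $y, y^{-1}$ lies in $Q$ (by totality on the cyclic subgroup it generates, or by intersecting with a maximal partial cone) — so that $P = f^{-1}(Q) \supseteq$ that element's preimage is non-empty. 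This bookkeeping about non-emptiness is where I expect the only real care to be required; everything else is a routine transfer of the semigroup/normal-set axioms through the homomorphism.
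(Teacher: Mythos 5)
Your core argument is exactly the paper's proof: the paper simply declares that $f^{-1}(Q(N))$ is a partial positive cone of $G$, and your verification of the semigroup, disjointness, and normality axioms for $P=f^{-1}(Q)$ is the routine check the paper leaves implicit. So the approach is the same and that part is fine.

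The last paragraph, on non-emptiness, is your own addition, and while the concern is legitimate (the paper's one-line proof ignores it, and the hypothesis that $\operatorname{Im}(f)$ is non-trivial is otherwise doing no work), your proposed remedies do not hold up. For a merely \emph{partial} positive cone it is false that every non-trivial $y\in N$ satisfies $y\in Q$ or $y^{-1}\in Q$; there is no ``totality on the cyclic subgroup it generates,'' and one cannot always enlarge $Q$ to a cone meeting $\operatorname{Im}(f)$: if $y$ has finite order then neither $y$ nor $y^{-1}$ can lie in any partial positive cone (indeed $y=y^{-(o(y)-1)}$ would put $y$ in $Q\cap Q^{-1}$). Taking $N=\mathbb{Z}\times\mathbb{Z}/2$ with cone $\{(n,x):n>0\}$ and $f$ the inclusion of $\mathbb{Z}/2$ shows that with the non-emptiness requirement the proposition is actually false, so no argument can close this gap in general; the statement is only safe under the convention that the empty cone is admitted, or in applications such as the paper's corollary where $N=\mathbb{Z}$ carries a total order and any non-trivial subgroup automatically meets the cone. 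I would either drop that paragraph or replace it with an explicit remark that $P\neq\emptyset$ needs the additional hypothesis $\operatorname{Im}(f)\cap Q(N)\neq\emptyset$, which holds in the intended application.
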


\begin{proof}
Let $Q(N)$ be the partial positive cone of $N$. Then the set $Q = f^{-1}(Q(N))$ is a partial positive cone of $G$.
\end{proof}

\begin{coro}
Intermediate subgroups of the braid groups of the disk are partially bi-orderable. In particular, $B_n(D)$ is partially bi-orderable.
\end{coro}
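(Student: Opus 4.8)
The plan is to reduce everything to Proposition~\ref{prop:ordemparcial} by exhibiting, for each intermediate subgroup, a group homomorphism into a partially bi-orderable group whose image is non-trivial. The natural target is $\Z$ with its usual ordering, which is bi-orderable and hence partially bi-orderable.

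First I would introduce the exponent-sum homomorphism $\varepsilon\colon B_n(D)\to\Z$ defined on the Artin generators by $\varepsilon(\sigma_i)=1$ for all $1\le i\le n-1$; this is well defined since the two sides of each defining relation in the presentation (\ref{eq:artin}) are words of the same length in the generators. Now let $H$ be an intermediate subgroup of $B_n(D)$, so that $P_n(D)\subsetneq H\subset B_n(D)$, and consider the restriction $\varepsilon|_H\colon H\to\Z$. Its image is non-trivial: since $H\supset P_n(D)$, it contains the generator $A_{1,2}=\sigma_1^2$ of Theorem~\ref{theo:presentationPn}, and $\varepsilon(A_{1,2})=2\neq 0$. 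Applying Proposition~\ref{prop:ordemparcial} with $G=H$, $N=\Z$ and $f=\varepsilon|_H$ then shows that $H$ is partially bi-orderable. For the final assertion, it suffices to run the same argument with $G=B_n(D)$ (equivalently, to note that $B_n(D)$ is itself an intermediate subgroup, since $P_n(D)\subsetneq B_n(D)$).

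There is essentially no obstacle in this argument: the only points requiring (minimal) care are checking that $\varepsilon$ respects the braid relations and that its restriction to an arbitrary intermediate subgroup is still non-trivial, the key observation for the latter being simply that $P_n(D)$ is not contained in $\ker\varepsilon$. One should resist the temptation to use the permutation homomorphism $\pi$ of (\ref{eq:shorexactsequence}) as the target map, since $S_n$ has torsion and is therefore not even left-orderable. The interest of the corollary lies precisely in the contrast with the main theorems: $B_n(D)$ and its intermediate subgroups fail to be bi-orderable, yet this failure is invisible at the level of partial orderings.
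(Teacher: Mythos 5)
Your proof is correct and follows essentially the same route as the paper: both use the exponent-sum (Abelianization) homomorphism $B_n(D)\to\Z$, observe that its restriction to any intermediate subgroup has non-trivial image because $\exp(P_n(D))=2\Z$, and conclude via Proposition~\ref{prop:ordemparcial}.
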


\begin{proof}
Let $H$ be an intermediate subgroup of $B_n(D)$. Consider the Abelianization homomorphism $\operatorname{exp} \colon B_n(D) \to \mathbb{Z}$ given by $\operatorname{exp}(\sigma_i) = 1$ for all $1 \leq i \leq n-1$. Since  $2\mathbb{Z} = \operatorname{exp}(P_n(D)) \subset \operatorname{exp}(H)$, the image $\operatorname{Im}(\left.\exp\right|_H)$ of the restriction of $\exp$ to $H$ is non-trivial, and so the result follows from Proposition \ref{prop:ordemparcial} and the fact that $\mathbb{Z}$ is clearly partially bi-orderable with the usual ordering.
\end{proof}

\subsection{Non-bi-orderability of certain intermediate subgroups}\label{subsec:cis}

\hspace{4mm} In this subsection, we show that certain intermediate subgroups of braid groups of the disk are not bi-orderable. 

\begin{defi}\label{defi:1}
Let $n \geq 2$. Given $\beta \in B_n(D) \setminus P_n(D)$, let $H_{\beta} = \langle P_n(D), \beta \rangle$, where $\langle P_n(D), \beta \rangle$ is the subgroup generated by $P_n(D)$ and $\beta$.
\end{defi}

We first study the family of intermediate subgroups described in Definition \ref{defi:1}. Note that $B_1(D)$ is trivial, and $B_2(D)$ is isomorphic to $\mathbb{Z}$. So from now on, we suppose that $n \geq 3$. Our goal is to show that the subgroup $H_{\beta}$ is not bi-orderable for every braid $\beta \in B_n (D) \setminus P_n (D)$, which will imply that no intermediate subgroup of the braid groups of the disk can be bi-ordered.

\begin{lemma}\label{lemma:subgrupo1}
Let $f \colon G \to H$ be a group homomorphism. Then $\langle \operatorname{Ker}f, \beta \rangle = f^{-1} ( \langle f(\beta) \rangle )$ for all $\beta \in G$.
\end{lemma}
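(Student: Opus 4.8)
The plan is to prove both inclusions. For the inclusion $\langle \operatorname{Ker}f, \beta \rangle \subset f^{-1}(\langle f(\beta)\rangle)$, I would observe that $f^{-1}(\langle f(\beta)\rangle)$ is a subgroup of $G$ containing both $\operatorname{Ker}f$ (since $f$ maps it into $\{1\}\subset\langle f(\beta)\rangle$) and $\beta$ (since $f(\beta)\in\langle f(\beta)\rangle$), hence it contains the subgroup generated by these, namely $\langle \operatorname{Ker}f,\beta\rangle$. This direction is immediate from the fact that the preimage of a subgroup under a homomorphism is a subgroup.

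For the reverse inclusion $f^{-1}(\langle f(\beta)\rangle) \subset \langle \operatorname{Ker}f, \beta\rangle$, I would take an arbitrary $g\in f^{-1}(\langle f(\beta)\rangle)$, so that $f(g) = f(\beta)^k$ for some $k\in\Z$. Then $f(g\beta^{-k}) = f(g)f(\beta)^{-k} = 1$, so $g\beta^{-k}\in\operatorname{Ker}f$. Writing $g = (g\beta^{-k})\beta^{k}$ exhibits $g$ as a product of an element of $\operatorname{Ker}f$ and a power of $\beta$, hence $g\in\langle\operatorname{Ker}f,\beta\rangle$. Combining the two inclusions gives the claimed equality.

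There is essentially no obstacle here; the statement is a routine fact about homomorphisms, and the only mild point of care is to use that $\langle f(\beta)\rangle = \{f(\beta)^k : k\in\Z\}$ so that every element of the image subgroup is literally a power of $f(\beta)$, which is what makes the ``subtract off $\beta^k$'' trick work in the second inclusion.
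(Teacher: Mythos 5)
Your proof is correct and follows essentially the same route as the paper's: the forward inclusion is the easy observation that the preimage subgroup contains the generators, and the reverse inclusion uses the identical ``subtract off $\beta^{k}$'' decomposition $y = (y\beta^{-k})\beta^{k}$ with $y\beta^{-k}\in\operatorname{Ker}f$. No issues.
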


\begin{proof}
First, note that $f(\langle \operatorname{Ker}f, \beta \rangle) = \langle f(\beta) \rangle$, hence $\langle \operatorname{Ker}(f), \beta \rangle \subset f^{-1}(\langle f(\beta) \rangle)$. Conversely, let $y~\in~f^{-1} ( \langle f(\beta) \rangle)$. Thus, $f (y) = f(\beta)^{\epsilon}$ for some $\epsilon \in \mathbb{Z}$. Then, we have $f ( y\beta^{-\epsilon}) = 1$, and so there exists $\alpha \in \operatorname{Ker}f$ such that $y\beta^{-\epsilon} = \alpha$. Hence $y = \alpha \beta^{\epsilon} \in \langle \operatorname{Ker}f, \beta \rangle$, and we conclude that $\langle \operatorname{Ker}f, \beta \rangle = f^{-1} ( \langle f(\beta) \rangle )$.
\end{proof}

Consider the short exact sequence in (\ref{eq:shorexactsequence}). Given $\beta \in B_n(D)$, by Lemma \ref{lemma:subgrupo1}, we have $\pi^{-1} ( \langle \pi(\beta) \rangle) = \langle \operatorname{Ker}\pi, \beta \rangle = H_{\beta}$. Thus if $\gamma \in B_n(D)$ is such that $\pi(\gamma) = \pi(\beta)$, then $H_{\beta} = H_{\gamma}$, and so to decide whether $H_{\beta}$ is bi-orderable, it suffices to study $H_{\gamma}$ for some fixed braid $\gamma$. Moreover, we shall prove that the bi-orderability of $H_{\beta}$ depends only on the cycle type of the permutation of the braid $\beta$.

\begin{lemma}\label{lemma:conjugatedsubgroups}
Let $H$ and $N$ be subgroups of a group $G$. Suppose that there exists $g \in G$ such that $H^g = N$. Then, $H$ is bi-orderable if and only if $N$ is bi-orderable.
\end{lemma}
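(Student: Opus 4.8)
The plan is to observe that conjugation by $g$ furnishes a group isomorphism between $H$ and $N$, and then to note that bi-orderability is an isomorphism invariant, which one checks concretely by transporting a bi-ordering across the isomorphism via its positive cone.

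First I would define $\phi \colon H \to N$ by $\phi(h) = g h g^{-1}$. Since $H^g = N$, this map is well defined and bijective, and it is visibly a homomorphism, hence a group isomorphism, with inverse $n \mapsto g^{-1} n g$ (note that $N^{g^{-1}} = H$, so the roles of $H$ and $N$ are symmetric). Thus it suffices to prove one implication.

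Next, assume $H$ is bi-orderable and let $P \subset H$ be the positive cone of a bi-ordering, as in Proposition~\ref{prop:positivecone}: $PP \subset P$, the set $H$ is the disjoint union of $P$, $\{1\}$ and $P^{-1}$, and $hPh^{-1} \subset P$ for all $h \in H$. I would then verify that $\phi(P)$ is a positive cone of a bi-ordering of $N$. The inclusion $\phi(P)\phi(P) = \phi(PP) \subset \phi(P)$ holds because $\phi$ is a homomorphism; the decomposition of $N$ as the disjoint union of $\phi(P)$, $\{1\}$ and $\phi(P)^{-1}$ follows from the corresponding decomposition of $H$ together with the facts that $\phi$ is a bijection, $\phi(1) = 1$, and $\phi(P^{-1}) = \phi(P)^{-1}$; and for normality, given $n \in N$ we write $n = \phi(h)$ for a (unique) $h \in H$, so that $n\,\phi(P)\,n^{-1} = \phi(h)\,\phi(P)\,\phi(h)^{-1} = \phi\bigl(hPh^{-1}\bigr) \subset \phi(P)$. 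By Proposition~\ref{prop:positivecone}, $N$ is bi-orderable.

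The converse is obtained by applying the same argument to $\phi^{-1}$. There is essentially no obstacle here; the only step that is not completely formal is the normality of the transported cone, and that is immediate from the observation that every element of $N$ lies in the image of $\phi$.
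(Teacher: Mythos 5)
Your proposal is correct and follows the same route as the paper: both transport the positive cone of $H$ to $N$ via conjugation by $g$ and invoke the positive-cone characterization of bi-orderability, using the symmetry $H = N^{g^{-1}}$ for the converse. Your write-up simply spells out the verification of the cone axioms in more detail than the paper does.
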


\begin{proof}
Suppose that $H$ is bi-orderable, and let $P(H)$ denote its positive cone. The semigroup $P = P(H)^g = \{ ghg^{-1}; \ h \in P(H) \}$ is a positive cone for $N$ and it is invariant under conjugation by elements of $N$. Therefore, $N$ is bi-orderable. Moreover, $H^g = N$ if and only if $H = N^{g^{-1}}$, and this concludes the proof.   
\end{proof}

\begin{prop}\label{prop:tipodepermutacao}
Let $\beta, \gamma \in B_n(D) \setminus P_n(D)$ be such that the permutations $\pi(\beta)$ and $\pi(\gamma)$ have the same cycle type. Then, the subgroups $H_{\beta}$ and $H_{\gamma}$ are conjugate. Moreover, $H_{\beta}$ is bi-orderable if and only if $H_{\gamma}$ is bi-orderable.  
\end{prop}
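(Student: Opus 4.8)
The plan is to exploit the elementary fact that two permutations of $S_n$ have the same cycle type if and only if they are conjugate in $S_n$, together with the surjectivity of $\pi$ in (\ref{eq:shorexactsequence}) and the normality of $P_n(D) = \operatorname{Ker}\pi$ in $B_n(D)$. First I would choose $\tau \in S_n$ with $\tau\,\pi(\beta)\,\tau^{-1} = \pi(\gamma)$, which exists by the assumption on cycle types, and then lift $\tau$ to a braid $\widehat{\tau} \in B_n(D)$ with $\pi(\widehat{\tau}) = \tau$, using that $\pi$ is onto. This reduces the whole statement to showing that $H_\beta^{\widehat{\tau}} = H_\gamma$.

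For this, I would argue as follows. Since $P_n(D)$ is normal in $B_n(D)$, conjugation by $\widehat{\tau}$ fixes $P_n(D)$ setwise, and therefore
\[
\widehat{\tau}\, H_\beta\, \widehat{\tau}^{-1} = \widehat{\tau}\, \langle P_n(D), \beta \rangle\, \widehat{\tau}^{-1} = \langle P_n(D), \widehat{\tau}\beta\widehat{\tau}^{-1} \rangle = H_{\widehat{\tau}\beta\widehat{\tau}^{-1}}.
\]
Here $H_{\widehat{\tau}\beta\widehat{\tau}^{-1}}$ is well defined in the sense of Definition \ref{defi:1}, because $\pi(\widehat{\tau}\beta\widehat{\tau}^{-1}) = \tau\,\pi(\beta)\,\tau^{-1} = \pi(\gamma) \neq 1$, so $\widehat{\tau}\beta\widehat{\tau}^{-1} \in B_n(D) \setminus P_n(D)$. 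Finally, by the observation following Lemma \ref{lemma:subgrupo1} — that $H_\delta = \pi^{-1}(\langle \pi(\delta) \rangle)$ depends only on $\pi(\delta)$ — and the equality $\pi(\widehat{\tau}\beta\widehat{\tau}^{-1}) = \pi(\gamma)$, we get $H_{\widehat{\tau}\beta\widehat{\tau}^{-1}} = H_\gamma$. Chaining these equalities yields $H_\beta^{\widehat{\tau}} = H_\gamma$, so $H_\beta$ and $H_\gamma$ are conjugate in $B_n(D)$.

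The equivalence of bi-orderability then follows at once from Lemma \ref{lemma:conjugatedsubgroups}. I do not anticipate a genuine obstacle: the only points demanding care are keeping the conjugation convention $H^g = gHg^{-1}$ consistent throughout, and checking that $\widehat{\tau}\beta\widehat{\tau}^{-1}$ is genuinely non-pure so that the subgroup $H_{\widehat{\tau}\beta\widehat{\tau}^{-1}}$ falls under Definition \ref{defi:1}; both are settled by the permutation computation above. The remaining content is the routine fact that a normal subgroup together with a conjugated extra element generates the conjugate of the original subgroup.
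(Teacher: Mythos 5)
Your proposal is correct and follows essentially the same route as the paper: conjugate the permutations in $S_n$, lift the conjugating permutation through the epimorphism $\pi$, use normality of $P_n(D)$ to identify $H_\beta^{\widehat{\tau}}$ with $H_{\widehat{\tau}\beta\widehat{\tau}^{-1}} = H_\gamma$ via the observation after Lemma \ref{lemma:subgrupo1}, and conclude with Lemma \ref{lemma:conjugatedsubgroups}. No gaps.
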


\begin{proof}
Since $\pi(\beta)$ and $\pi(\gamma)$ have the same cycle type in $S_n$, there exists $\tau \in S_n$ such that $\tau \pi(\beta) \tau^{-1} = \pi(\gamma)$. The map $\pi$ is an epimorphism, so there exists $\alpha \in B_n(D)$ such that $\pi(\alpha) = \tau$. Thus, $\pi(\alpha \beta \alpha^{-1}) = \pi(\gamma)$, and we obtain:
\begin{equation*}
H_{\gamma} = \langle P_n(D), \gamma \rangle = \langle P_n(D), \alpha \beta \alpha^{-1} \rangle = \langle P_n(D), \beta \rangle^{\alpha} = H_{\beta}^{\alpha},  
\end{equation*}
because $P_n(D)$ is a normal subgroup of $B_n(D)$. Since $H_{\beta}$ and $H_{\gamma}$ are conjugate, by Lemma~\ref{lemma:conjugatedsubgroups}, $H_{\beta}$ is bi-orderable if and only if $H_{\gamma}$ is bi-orderable.
\end{proof}

By Proposition \ref{prop:tipodepermutacao}, to decide whether the subgroup $H_{\beta}$ is bi-orderable, where $\beta \in B_n(D)$, it suffices to analyze the bi-orderability of $H_{\gamma}$ for a braid $\gamma \in B_n(D)$ whose permutation has the same cycle type as $\pi(\beta)$. The rest of this section is devoted to studying $H_{\beta}$, where $\pi(\beta)$ is a transposition, a product of disjoint transpositions or a cycle of length greater than $2$. We start with the case where $\pi(\beta)$ is a transposition.

\begin{prop}\label{prop:PnSigma}
Let $n \geq 3$, and let $\beta \in B_n(D)$ be such that $\pi(\beta)$ is a transposition. The subgroup $H_{\beta}$ of $B_n(D)$ is not bi-orderable.
\end{prop}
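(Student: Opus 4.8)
The plan is to use Proposition~\ref{prop:tipodepermutacao} to reduce to a single convenient braid, and then to exhibit a failure of the unique root property, so that Proposition~\ref{lema:raizesunicas}$(i)$ forces the conclusion.

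First I would pick the representative $\gamma = \Delta_3 = \sigma_1\sigma_2\sigma_1 \in B_n(D)$: by Equation~\ref{eq:set9} with $k=3$ its permutation is $\pi(\Delta_3) = (1,3)$, which is a transposition, hence of the same cycle type as $\pi(\beta)$ (here $n \geq 3$ is what makes $(1,3)$ a transposition of $\{1,\dots,n\}$). By Proposition~\ref{prop:tipodepermutacao} the subgroups $H_\beta$ and $H_{\Delta_3}$ are conjugate, so it is enough to show that $H_{\Delta_3} = \langle P_n(D),\Delta_3\rangle$ is not bi-orderable.

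Next I would write down two distinct square roots of one element of $H_{\Delta_3}$. Set $x = \Delta_3$ and $v = A_{1,2}\,\Delta_3\,A_{1,2}^{-1}$, where $A_{1,2} = \sigma_1^2 \in P_n(D)$; then $x,v \in H_{\Delta_3}$, since $v$ is a product of $\Delta_3$ with elements of $P_n(D)$. For the squares, $x^2 = \Delta_3^2$ is the full-twist on three strands, which lies in the center of $B_3(D) \subset B_n(D)$ by Proposition~\ref{rem:propriedadeshalftwist} and Remark~\ref{remarkinclusion1}; since $A_{1,2} = \sigma_1^2 \in B_2(D) \subset B_3(D)$, it commutes with $\Delta_3^2$, whence $v^2 = A_{1,2}\,\Delta_3^2\,A_{1,2}^{-1} = \Delta_3^2 = x^2$. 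On the other hand $x \neq v$: Garside's Lemma~\ref{rem:propriedadeshalftwist1} with $k=3$, $i=1$ gives $\Delta_3^{-1}\sigma_1\Delta_3 = \sigma_2$, hence $\Delta_3^{-1}A_{1,2}\Delta_3 = \sigma_2^2 = A_{2,3} \neq A_{1,2}$, so $A_{1,2}$ does not commute with $\Delta_3$ and therefore $v \neq x$. If $H_{\Delta_3}$ were bi-orderable, then $x^2 = v^2$ would force $x = v$ by Proposition~\ref{lema:raizesunicas}$(i)$, a contradiction; hence $H_{\Delta_3}$, and so $H_\beta$, is not bi-orderable.

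The only genuinely delicate point is the choice of $\gamma = \Delta_3$: replacing an arbitrary transposition by $\Delta_3$ makes the square $x^2 = \Delta_3^2$ central in $B_3(D)$, which is exactly what lets conjugation by a pure braid that fails to centralise $\Delta_3$ create a second square root \emph{inside} the intermediate subgroup — a maneuver that is not at all obvious for a general $\beta$, and which also pinpoints the role of $n \geq 3$ (for $n=2$ one has $H_{\sigma_1} = B_2(D) \cong \mathbb{Z}$, which is bi-orderable). One could alternatively phrase the conclusion via generalized torsion: a short computation with Lemma~\ref{rem:propriedadeshalftwist1} shows $[x,v] = (A_{2,3}A_{1,2}^{-1})^2 \neq 1$, so $x$ and $v$ do not commute, $\langle x,v\rangle$ is a quotient of $G_{2,2}$, and $[x,v]$ is then a generalized torsion element of $H_\beta$ by Proposition~\ref{prop:gt} (compare Equation~\ref{eq:out17}); but the unique-root argument above is the shortest route and avoids that extra check.
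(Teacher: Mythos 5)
Your proof is correct. It shares the paper's overall strategy --- reduce by cycle type via Proposition~\ref{prop:tipodepermutacao}, then exhibit two distinct square roots of a single element and invoke Proposition~\ref{lema:raizesunicas}$(i)$ --- but realizes it differently. The paper takes the representative $\sigma_1$ (permutation $(1,2)$) and checks $(\sigma_1\sigma_2^2)^2=(\sigma_2^2\sigma_1)^2$ by a direct manipulation of the Artin relations; the common square there is again $\Delta_3^2=(\sigma_1\sigma_2\sigma_1)(\sigma_2\sigma_1\sigma_2)$, and the contradiction is that $\sigma_1 A_{2,3}=A_{2,3}\sigma_1$ would force $A_{1,3}=\sigma_1^{-1}A_{2,3}\sigma_1=A_{2,3}$, against Remark~\ref{rem.1}. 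You instead take $\Delta_3$ (permutation $(1,3)$), which trades the hands-on computation for the centrality of $\Delta_3^2$ in $B_3(D)$ together with Garside's conjugation formula; this is exactly the mechanism the paper uses afterwards in Lemma~\ref{lemma:comutatividade1} and Proposition~\ref{prop:halftwist} for $H_{\Delta_i}$ with $i\geq 3$, and your observation that $\pi(\Delta_3)$ is itself a single transposition shows that the half-twist argument already subsumes the transposition case, so the separate treatment of $H_{\sigma_1}$ could be dispensed with. There is no circularity: you only rely on Proposition~\ref{rem:propriedadeshalftwist}, Lemma~\ref{rem:propriedadeshalftwist1} and Remark~\ref{rem.1}, all of which precede the statement. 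The one step you leave implicit is the inequality $A_{2,3}\neq A_{1,2}$; as in the paper's Lemma~\ref{lemma:comutatividade1}, this follows from Remark~\ref{rem.1} by applying the strand-forgetting homomorphism $(\varphi_1)_{\#}$, which sends $A_{2,3}$ to $A_{1,2}\neq 1$ and $A_{1,2}$ to $1$. Your closing remark on generalized torsion, with $[x,v]=(A_{2,3}A_{1,2}^{-1})^2$, also checks out and matches the paper's Remark~\ref{rem:1}.
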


\begin{proof}
By Proposition \ref{prop:tipodepermutacao}, it suffices to show that $H_{\sigma_1}$ is not bi-orderable. Using the Artin relations (\ref{eq:artin}), we have:
\begin{equation}\label{eq:a}
(\sigma_1\sigma_2^2)^2 = \sigma_1 \sigma_2(\sigma_2 \sigma_1 \sigma_2) \sigma_2 = (\sigma_1 \sigma_2 \sigma_1) (\sigma_2 \sigma_1 \sigma_2) = \sigma_2 (\sigma_1 \sigma_2 \sigma_1) \sigma_2 \sigma_1 = (\sigma_2^2 \sigma_1)^2. 
\end{equation}
Note that $\sigma_1 \sigma_2^2$ and $\sigma_2^2 \sigma_1$ belong to $H_{\sigma_1}$. If $H_{\sigma_1}$ is bi-orderable, by Proposition \ref{lema:raizesunicas}, we have:
\begin{equation*}
\sigma_1A_{2,3} = \sigma_1 \sigma_2^2 = \sigma_2^2 \sigma_1 = A_{2,3} \sigma_1.
\end{equation*}
However, $A_{2,3} \neq \sigma_1^{-1}A_{2,3} \sigma_1 = A_{1,3}$, by Remark \ref{rem.1}. Hence, $H_{\sigma_1}$ does not have the unique root property, and so, by Proposition \ref{lema:raizesunicas}, it is not bi-orderable. \end{proof}

We now give a preliminary result, which we use to show that $H_{\beta}$ is not bi-orderable in the case that $\pi(\beta)$ is a product of disjoint transpositions.

\begin{lemma}\label{lemma:comutatividade1}
Let $\ 3 \leq i \leq n$. In $B_i(D) \subset B_n(D)$, the braids $A_{1,2}$ and $\Delta_i$ do not commute.
\end{lemma}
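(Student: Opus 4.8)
The plan is to reduce the non-commutation of $A_{1,2}$ and $\Delta_i$ to the single inequality $\sigma_1^2 \neq \sigma_{i-1}^2$ in $B_i(D)$, by conjugating $A_{1,2}$ by $\Delta_i$ and invoking Garside's Lemma~\ref{rem:propriedadeshalftwist1}.

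First I would recall from Theorem~\ref{theo:presentationPn} that $A_{1,2} = \sigma_1^2$. Applying Lemma~\ref{rem:propriedadeshalftwist1} with $k = i$ and index $1$ gives $\sigma_1 \Delta_i = \Delta_i \sigma_{i-1}$, that is, $\Delta_i^{-1} \sigma_1 \Delta_i = \sigma_{i-1}$; squaring yields $\Delta_i^{-1} A_{1,2} \Delta_i = \sigma_{i-1}^2 = A_{i-1,i}$. Hence $A_{1,2}$ commutes with $\Delta_i$ if and only if $A_{1,2} = \Delta_i^{-1} A_{1,2} \Delta_i$, i.e. if and only if $\sigma_1^2 = \sigma_{i-1}^2$ in $B_i(D)$. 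So the whole statement comes down to showing that these two squared generators are distinct when $i \geq 3$.

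To see $\sigma_1^2 \neq \sigma_{i-1}^2$, note that both $\sigma_1^2 = A_{1,2}$ and $\sigma_{i-1}^2 = A_{i-1,i}$ lie in $P_i(D)$, so I would apply the strand-forgetting epimorphism $(\varphi_1)_{\#} \colon P_i(D) \to P_{i-1}(D)$. Forgetting the first strand kills $A_{1,2}$, so $(\varphi_1)_{\#}(\sigma_1^2) = 1$; since $i \geq 3$, strand $1$ is not involved in $A_{i-1,i}$, and therefore $(\varphi_1)_{\#}(\sigma_{i-1}^2) = A_{i-2,i-1} \neq 1$ (for $i=3$ this is exactly Remark~\ref{rem.1}). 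Thus $\sigma_1^2 \neq \sigma_{i-1}^2$, and the two braids do not commute. The only non-formal ingredient is this last inequality, which is immediate from any standard invariant of pure braids — for instance the abelianization $P_i(D) \to \mathbb{Z}^{\binom{i}{2}}$, under which $A_{1,2}$ and $A_{i-1,i}$ map to distinct basis vectors because $\{1,2\} \neq \{i-1,i\}$ for $i \geq 3$; the remainder of the argument is a one-line computation with Garside's Lemma, so I do not expect any real obstacle here.
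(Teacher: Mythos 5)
Your proof is correct and follows essentially the same route as the paper: conjugate $A_{1,2}=\sigma_1^2$ by $\Delta_i$ using Garside's relation $\sigma_1\Delta_i=\Delta_i\sigma_{i-1}$ to get $A_{i-1,i}$, and conclude from $A_{1,2}\neq A_{i-1,i}$ for $i\geq 3$. The only difference is that you spell out the justification of this last inequality (via the strand-forgetting map or abelianization), which the paper leaves implicit.
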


\begin{proof}
Suppose that there exists $k \geq 3$ such that $A_{1,2}$ commutes with $\Delta_k$. By Lemma~\ref{rem:propriedadeshalftwist1}, we have  $\sigma_1 \Delta_k = \Delta_k \sigma_{k-1}$. Then: 
\begin{equation*}
A_{1,2} = \sigma_1^2 = \Delta_k^{-1} \sigma_1^2 \Delta_k = (\Delta_k^{-1} \sigma_1 \Delta_k)^2 =  \sigma_{k-1}^2 = A_{k-1, k}, 
\end{equation*}
which holds if and only if $k=2$. Therefore, $A_{1,2}$ and $\Delta_i$ do not commute if $3 \leq i \leq n-1$.
\end{proof}

\begin{prop}\label{prop:halftwist}
Let $3 \leq i \leq n$. Then, the subgroup $H_{\Delta_i} = \langle P_n(D), \Delta_i \rangle$ is not bi-orderable.
\end{prop}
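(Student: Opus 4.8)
The plan is to show that $H_{\Delta_i}$ fails the unique root property of Proposition~\ref{lema:raizesunicas}$(i)$, in the spirit of the proof of Proposition~\ref{prop:PnSigma}. The crucial observation is that $\Delta_i^2$, the full-twist on $i$ strands, lies in the center of $B_i(D)$; this follows from Proposition~\ref{rem:propriedadeshalftwist} and Remark~\ref{rem:6}, or directly from Garside's Lemma~\ref{rem:propriedadeshalftwist1}, since for $1 \le k \le i-1$ one computes $\sigma_k \Delta_i^2 = \Delta_i \sigma_{i-k} \Delta_i = \Delta_i^2 \sigma_k$, so $\Delta_i^2$ commutes with every Artin generator of $B_i(D)$. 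In particular, for every $\gamma \in B_i(D)$ one has $(\gamma \Delta_i \gamma^{-1})^2 = \gamma \Delta_i^2 \gamma^{-1} = \Delta_i^2$.

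Next I would take $\gamma = A_{1,2} = \sigma_1^2 \in P_2(D) \subset P_n(D)$, and set $g = \Delta_i$ and $h = A_{1,2}\Delta_i A_{1,2}^{-1}$. Both $g$ and $h$ belong to $H_{\Delta_i}$ (as $P_n(D) \subset H_{\Delta_i}$ and $\Delta_i \in B_i(D) \subset B_n(D)$), and by the previous paragraph $g^2 = \Delta_i^2 = h^2$. If $H_{\Delta_i}$ were bi-orderable, Proposition~\ref{lema:raizesunicas}$(i)$ would give $g = h$, that is, $A_{1,2}\Delta_i A_{1,2}^{-1} = \Delta_i$, which is exactly to say that $A_{1,2}$ and $\Delta_i$ commute. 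Since $3 \le i \le n$, this contradicts Lemma~\ref{lemma:comutatividade1}, and we conclude that $H_{\Delta_i}$ is not bi-orderable.

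I do not expect a genuine obstacle in this argument: it is short, and both of its inputs are already in hand --- the centrality of $\Delta_i^2$ in $B_i(D)$ and the non-commutativity of $A_{1,2}$ with $\Delta_i$ (Lemma~\ref{lemma:comutatividade1}). The only point deserving a moment's care is bookkeeping: keeping track that $A_{1,2}$, $\Delta_i$, and $A_{1,2}\Delta_i A_{1,2}^{-1}$ are all being viewed inside the single subgroup $H_{\Delta_i}$ of $B_n(D)$, so that the unique root property may legitimately be invoked there.
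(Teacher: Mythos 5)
Your proof is correct and is essentially identical to the paper's: both use the centrality of $\Delta_i^2$ in $B_i(D)$ to get $(A_{1,2}\Delta_i A_{1,2}^{-1})^2 = \Delta_i^2$, invoke the unique root property of Proposition~\ref{lema:raizesunicas}, and derive a contradiction with Lemma~\ref{lemma:comutatividade1}. The only difference is cosmetic: you additionally verify the centrality of $\Delta_i^2$ directly from Garside's Lemma~\ref{rem:propriedadeshalftwist1}, whereas the paper simply cites Proposition~\ref{rem:propriedadeshalftwist}.
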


\begin{proof}
Let $i \in \{3, \dots, n\}$. Since the center of $B_i(D)$ is $\langle \Delta_i^2 \rangle$ by Proposition \ref{rem:propriedadeshalftwist}, we have:
\begin{equation}\label{eq:5}
(A_{1,2} \Delta_i A_{1,2}^{-1})^2 = A_{1,2} \Delta_i^2 A_{1,2}^{-1} = \Delta_i^2 = (\Delta_i)^2.
\end{equation}
So, if $H_{\Delta_i}$ is bi-orderable, then $A_{1,2} \Delta_i A_{1,2}^{-1} = \Delta_i$, which contradicts Lemma \ref{lemma:comutatividade1}.
\end{proof}

Finally, we study the intermediate subgroup $H_{\beta}$ for a braid $\beta \in B_n(D)$ whose permutation is a cycle of length strictly greater than 2.

\begin{lemma}\label{lemma:raizdohalftwist}
Let $n \geq 4$. In $B_{i+1}(D) \subset B_n(D)$, with $2 \leq i \leq n-1$, the braids $A_{1,2}$ and $\sigma_1 \sigma_2 \cdots \sigma_{i}$ do not commute.
\end{lemma}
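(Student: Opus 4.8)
The plan is to argue by contradiction. Write $A_{1,2}=\sigma_1^2$ and set $\delta:=\sigma_1\sigma_2\cdots\sigma_i$, an element of $B_{i+1}(D)\subset B_n(D)$. Assume $A_{1,2}$ and $\delta$ commute; equivalently, $\delta\sigma_1^2\delta^{-1}=\sigma_1^2$. The goal is to deduce the forbidden equality $A_{1,2}=A_{2,3}$ among the Artin pure braid generators.

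First I would compute the conjugate $\delta\sigma_1\delta^{-1}$. Since $\sigma_1$ commutes with $\sigma_3,\sigma_4,\dots,\sigma_i$ by the far-commutativity relations in (\ref{eq:artin}), the tail $\sigma_3\cdots\sigma_i$ of $\delta$ cancels against its inverse, so $\delta\sigma_1\delta^{-1}=\sigma_1\sigma_2\sigma_1\sigma_2^{-1}\sigma_1^{-1}$; applying the braid relation $\sigma_1\sigma_2\sigma_1=\sigma_2\sigma_1\sigma_2$ then yields $\delta\sigma_1\delta^{-1}=\sigma_2$. (When $i=2$ there is no tail and this is immediate.) Squaring gives $\delta A_{1,2}\delta^{-1}=\delta\sigma_1^2\delta^{-1}=\sigma_2^2=A_{2,3}$.

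Under the commuting hypothesis the left-hand side equals $A_{1,2}$, so we obtain $A_{1,2}=A_{2,3}$ inside $B_3(D)\subset B_{i+1}(D)\subset B_n(D)$. But this is impossible: both elements lie in $P_3(D)$, and by Remark \ref{rem.1} the strand-forgetting homomorphism $(\varphi_1)_{\#}\colon P_3(D)\to P_2(D)$ satisfies $(\varphi_1)_{\#}(A_{1,2})=1$ while $(\varphi_1)_{\#}(A_{2,3})=A_{1,2}\neq 1$, whence $A_{1,2}\neq A_{2,3}$. This contradiction proves that $A_{1,2}$ and $\sigma_1\sigma_2\cdots\sigma_i$ do not commute.

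The computations involved are all short, so there is no serious obstacle; the only point demanding a little care is the bookkeeping in the conjugation $\delta\sigma_1\delta^{-1}$, namely using the commutation of $\sigma_1$ with $\sigma_3,\dots,\sigma_i$ correctly and handling the edge case $i=2$ separately.
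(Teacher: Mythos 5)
Your proof is correct and follows essentially the same route as the paper: assume commutativity, use far-commutativity of $\sigma_1$ with $\sigma_3,\dots,\sigma_i$ and the braid relation to reduce to an equality of two distinct generators of $P_3(D)$, then contradict Remark \ref{rem.1}. The only cosmetic difference is that you conjugate $A_{1,2}$ by $\sigma_1\cdots\sigma_i$ to reach $A_{1,2}=A_{2,3}$, whereas the paper expands the commutator directly and reaches $A_{1,2}=A_{1,3}$; both contradictions are covered by the same remark.
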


\begin{proof}
Suppose on the contrary that there exists $i \in \{2, \dots, n-1\}$ such that $A_{1,2}$ commutes with $\sigma_1 \sigma_2 \cdots \sigma_{i}$. Using the relation $\sigma_j \sigma_k = \sigma_k \sigma_j$, whenever $|j - k| \geq 2$, we have:
\begin{align*}
1 =[A_{1,2}, \sigma_1 \sigma_2 \sigma_3 \cdots \sigma_i] & = 
\sigma_1^2 \sigma_1 \sigma_2 (\sigma_3 \cdots \sigma_i) \sigma_1^{-2} (\sigma_3 \cdots \sigma_i)^{-1} \sigma_2^{-1} \sigma_1^{-1} \\ 
& =  \sigma_1^2 \sigma_1 \sigma_2 \sigma_1^{-2}  (\sigma_3 \cdots \sigma_i) (\sigma_3 \cdots \sigma_i)^{-1} \sigma_2^{-1} \sigma_1^{-1} \\ 
& = \sigma_1 A_{1,2} A_{1,3}^{-1} \sigma_1^{-1}. 
\end{align*}
Thus $A_{1,2} = A_{1,3}$, which is absurd by Remark \ref{rem.1}, and this proves the lemma.
\end{proof}

\begin{prop}\label{prop:Pn2Sigma}
Let $n \geq 3$, and let $2 \leq i \leq n-1$. Let $\beta \in B_n(D)$ be such that $\pi(\beta)$ is a cycle of length $i+1$. Then, $H_{\beta}$ is not bi-orderable.
\end{prop}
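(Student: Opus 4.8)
The plan is to follow the template of Propositions~\ref{prop:PnSigma} and~\ref{prop:halftwist}: first replace $\beta$ by a convenient braid with the same cycle type, then produce two elements of the corresponding subgroup that share a nontrivial common power but do not commute, and finally invoke the unique root property of Proposition~\ref{lema:raizesunicas}$(i)$. Concretely, I would start by noting that $\pi(\sigma_1\sigma_2\cdots\sigma_i)$ is an $(i+1)$-cycle (it is supported on $\{1,\dots,i+1\}$ and is a single cycle there), so it has the same cycle type as $\pi(\beta)$; since $i+1\geq 3$ this braid lies in $B_n(D)\setminus P_n(D)$. By Proposition~\ref{prop:tipodepermutacao} it therefore suffices to show that $H_{\gamma}$ is not bi-orderable for $\gamma=\sigma_1\sigma_2\cdots\sigma_i$.

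Next, set $\delta=\sigma_1\sigma_2\cdots\sigma_i\in B_{i+1}(D)\subset B_n(D)$. Applying Proposition~\ref{rem:propriedadeshalftwist} to $B_{i+1}(D)$ (note $i+1\geq 3$) gives $\delta^{\,i+1}=(\sigma_1\cdots\sigma_i)^{i+1}=\Delta_{i+1}^2$, the full twist, which is central in $B_{i+1}(D)$. In particular $A_{1,2}\in P_{i+1}(D)\subset B_{i+1}(D)$ commutes with it, so
\[
\bigl(A_{1,2}\,\delta\,A_{1,2}^{-1}\bigr)^{i+1}=A_{1,2}\,\delta^{\,i+1}\,A_{1,2}^{-1}=A_{1,2}\,\Delta_{i+1}^2\,A_{1,2}^{-1}=\Delta_{i+1}^2=\delta^{\,i+1}.
\]
Both $\delta=\gamma$ and $A_{1,2}\,\delta\,A_{1,2}^{-1}$ (recall $A_{1,2}\in P_n(D)$) belong to $H_{\gamma}$, and $i+1\geq 3\geq 1$, so if $H_{\gamma}$ were bi-orderable then Proposition~\ref{lema:raizesunicas}$(i)$ would force $A_{1,2}\,\delta\,A_{1,2}^{-1}=\delta$, that is, $A_{1,2}$ would commute with $\sigma_1\sigma_2\cdots\sigma_i$.

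This contradicts Lemma~\ref{lemma:raizdohalftwist}, so $H_{\gamma}$, and hence $H_{\beta}$, is not bi-orderable. I do not expect a serious obstacle: the argument is essentially routine once the representative $\gamma=\sigma_1\cdots\sigma_i$ is chosen, the only point needing a word of care being the boundary case $n=3$ (so $i=2$), for which Lemma~\ref{lemma:raizdohalftwist} is not literally stated, but applies either via the inclusion $B_3(D)\subset B_4(D)$ (taking $n=4$ in the lemma) or directly, since its computation remains valid for $i=2$.
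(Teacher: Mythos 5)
Your proposal is correct and follows essentially the same argument as the paper: for $n\geq 4$ it is identical (replace $\beta$ by $\sigma_1\cdots\sigma_i$ via Proposition~\ref{prop:tipodepermutacao}, use the centrality of $(\sigma_1\cdots\sigma_i)^{i+1}=\Delta_{i+1}^2$ together with unique roots, and conclude with Lemma~\ref{lemma:raizdohalftwist}). The only difference is that the paper handles $n=3$ separately via $(\sigma_1\sigma_2)^3=(\sigma_2\sigma_1)^3$ and the distinct permutations of $\sigma_1\sigma_2$ and $\sigma_2\sigma_1$, whereas your uniform treatment also works, since the computation proving Lemma~\ref{lemma:raizdohalftwist} remains valid for $i=2$ in $B_3(D)$ (or one can pass through the embedding $B_3(D)\hookrightarrow B_4(D)$ as you note).
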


\begin{proof}
Since $\pi(\sigma_1 \cdots \sigma_{i})$ is an $(i+1)$-cycle, by Proposition \ref{prop:tipodepermutacao}, it suffices to prove the statement for $H_{\sigma_1 \cdots \sigma_{i}} = \langle P_n, \sigma_1 \cdots \sigma_{i} \rangle$. First let $n=3$, so $i=2$, and consider the subgroup $H_{\sigma_1 \sigma_2}$. From Lemma \ref{lemma:subgrupo1}, the braid $\sigma_{2} \sigma_1$ belongs to $\pi^{-1} ( \langle \pi(\sigma_1 \sigma_{2} \rangle ) = H_{\sigma_1 \sigma_{2}}$, because $\pi (\sigma_{2} \sigma_1) = (1,2, 3) = (1, 3, 2)^2 = \pi(\sigma_1 \sigma_{2})^2$. Note that:
\begin{equation}\label{eq:3}
(\sigma_1 \sigma_{2})^3 = (\sigma_1 \sigma_{2} \sigma_1) (\sigma_{2} \sigma_1 \sigma_{2}) = (\sigma_{2} \sigma_1 \sigma_{2}) (\sigma_1 \sigma_{2} \sigma_1) = (\sigma_{2} \sigma_1)^3.
\end{equation}
If $H_{\sigma_1 \sigma_{2}}$ is bi-orderable, Equation (\ref{eq:3}) implies that $\sigma_1 \sigma_{2} = \sigma_{2} \sigma_1$, which yields a contradiction, since $\pi(\sigma_1 \sigma_{2}) = (1, 3, 2) \neq (1, 2, 3) = \pi(\sigma_{2} \sigma_1)$. 

Now suppose that $n \geq 4$. In $B_{i+1} (D) \subset B_n(D)$, since $\Delta_{i+1}^2 = (\sigma_1 \cdots \sigma_{i})^{i+1}$ and this element is central in $B_{i+1}$, we have:
\begin{equation*}
\left(A_{1,2} (\sigma_1 \cdots \sigma_i) A_{1,2}^{-1} \right)^{i+1} = (\sigma_1 \cdots \sigma_i)^{i+1}.
\end{equation*}
If $H_{\sigma_1 \cdots \sigma_{i}}$ is bi-orderable, by Proposition \ref{lema:raizesunicas}, it follows that $A_{1,2} (\sigma_1 \cdots \sigma_i) A_{1,2}^{-1} = \sigma_1 \cdots \sigma_i$. So $A_{1,2}$ commutes with $\sigma_1 \cdots \sigma_i$, which contradicts Lemma~\ref{lemma:raizdohalftwist}. Hence, the subgroup $H_{\sigma_1 \cdots \sigma_{i}}$ is not bi-orderable.
\end{proof}

\subsection{Proof of Theorem 1}\label{section:3}

\hspace{4mm} We now prove Theorem \ref{theo:teorema1}.

\begin{proof}
Let us show that $H_{\beta} \subset H$ cannot be bi-ordered for all $\beta \in H \setminus P_n (D)$, which will imply that $H$ is not bi-orderable.

First, let $n=3$. Since any non-trivial permutation of $S_3$ is either a transposition or a $3$-cycle, by Proposition \ref{prop:tipodepermutacao}, the intermediate subgroup $H_{\beta}$ of $B_3$ is either conjugate to $H_{\sigma_1}$ or $H_{\sigma_1 \sigma_2}$. From Propositions \ref{prop:PnSigma} and \ref{prop:Pn2Sigma}, we see that $H_{\beta}$ is not bi-orderable. Now let $n \geq 4$, and assume that $P_n (D) \subsetneq H$. 

Now, let $\beta \in H \setminus P_n (D)$. We shall divide the proof into two cases:

\begin{enumerate}[(a)]
\item $\pi(\beta)$ is a product of disjoint transpositions;
\item $\pi(\beta)$ is a product of disjoint cycles, one of which is of length greater than or equal to $3$.
\end{enumerate}

For case $(a)$, we may write $\pi(\beta) = \displaystyle\prod_{l=1}^m (i_l, j_l)$, a product of disjoint transpositions, where $1 \leq m \leq \lfloor \frac{n}{2} \rfloor$ and $i_1, j_1, \dots, i_m, j_m$ are pairwise distinct. For $2 \leq i \leq m$, recall that $\Delta_i = \sigma_1 (\sigma_2 \sigma_1) \cdots (\sigma_{i-1} \cdots \sigma_2 \sigma_1)$. Then, by Equation (\ref{eq:set9}), we have:
\begin{equation*}
\pi(\Delta_{2m}) = \displaystyle\prod_{k=1}^{m} (k,  (i+1)-k), 
\end{equation*}
a product of $m$ disjoint transpositions, therefore $\Delta_{2m}$ and $\beta$ have the same cycle type, and so $H_{\beta}$ and $H_{\Delta_{2m}}$ are conjugate by Proposition \ref{prop:tipodepermutacao}. If $m=1$, $H_{\Delta_{2}} = H_{\sigma_1}$, and Proposition~\ref{prop:PnSigma} shows that $H_{\Delta_{2}}$ is not bi-orderable, and if $m \geq 2$, $H_{\Delta_{2m}}$ is not bi-orderable by Proposition~\ref{prop:halftwist}.

For case $(b)$, we may write $\pi(\beta) = i_1 \cdots i_l$, a product of disjoint cycles $i_k$ of length $j_k$, with $1 \leq k \leq l$, $1 \leq l \leq \lfloor \frac{n-1}{2} \rfloor$, $2 \leq j_k \leq n$, $j_p \geq 3$ for some $p \in \{1, \dots, l\}$, and $\displaystyle\sum_{k=1}^l j_k \leq n$. Since the cycles are disjoint, up to renumbering $i_1, \dots, i_l$, we may suppose that $p = 1$, and so $j_1 \geq 3$. Now, define $\alpha \in B_n (D)$ by:
\begin{equation}\label{eq:10}
\alpha = \displaystyle\prod_{k=1}^l \alpha_k, \ \text{with} \ \alpha_k = \sigma_{\sum_{m=1}^{k-1} j_m + 1} \cdots \sigma_{\sum_{m=1}^{k} j_m - 1},
\end{equation}
that is:
\begin{equation*}
\alpha = (\sigma_1 \cdots \sigma_{j_1 - 1})(\sigma_{j_1 + 1} \cdots \sigma_{j_1 + j_2 - 1}) \cdots (\sigma_{j_1 + \cdots + j_{l-1} +1} \cdots \sigma_{j_1 + \cdots + j_{l} -1}).
\end{equation*} 
Note that: 
\begin{equation*}
\pi(\alpha) = \displaystyle\prod_{k=1}^l \pi(\alpha_k) = \displaystyle\prod_{k=1}^l \left( \displaystyle\sum_{m=1}^k j_m , \displaystyle\sum_{m=1}^k j_m -1 , \cdots , \displaystyle\sum_{m=1}^{k-1} j_m +1 \right).
\end{equation*}
Then, $\pi(\alpha)$ is a product of disjoint cycles, where the cycle $\pi(\alpha_k)$ is of length $j_k$, and so the permutations of $\alpha$ and $\beta$ have the same cycle type. By Proposition \ref{prop:tipodepermutacao}, $H_{\beta}$ is  bi-orderable if and only if $H_{\alpha}$ is. We shall show that the unique root property does not hold in $H_{\alpha}$, and consequently, by Proposition \ref{lema:raizesunicas}, this subgroup cannot be bi-ordered. By the Artin relations (\ref{eq:artin}) and Equation~(\ref{eq:10}), the braids $\alpha_i$ and $\alpha_j$ commute pairwise for every $i,j \in \{1, \dots, l\}$. Note also that $A_{1,2} = \sigma_1^2$ commutes with $\alpha_i$ for every $i \in \{2, \dots, l\}$ because $j_1 \geq 3$. Since $\alpha_1^{j_1} = (\sigma_1 \cdots \sigma_{j_1 - 1})^{j_1} = \Delta_{j_1}^2$ is a central element of $B_{j_1} (D) \subset B_n (D)$, by Proposition \ref{rem:propriedadeshalftwist}, we have: 
\begin{align}\label{eq:13}
(A_{1,2} \alpha A_{1,2}^{-1})^{j_1} & =  A_{1,2} \alpha^{j_1} A_{1,2}^{-1} 
 =  A_{1,2} \alpha_1^{j_1} \alpha_2^{j_1} \cdots \alpha_l^{j_1} A_{1,2}^{-1} 
 =  \alpha_1^{j_1}  A_{1,2} (\alpha_2^{j_1}  \cdots \alpha_l^{j_1})A_{1,2}^{-1} \\ \notag
& =  \alpha_1^{j_1}  \alpha_2^{j_1}  \cdots \alpha_l^{j_1} A_{1,2}A_{1,2}^{-1} 
 =  \alpha^{j_1}. \notag
\end{align}
If the subgroup $H_{\alpha}$ is bi-orderable, by Proposition \ref{lema:raizesunicas}, we have $A_{1,2} \alpha A_{1,2}^{-1} = \alpha$. However:
\begin{align*}
[A_{1,2}, \alpha] & = A_{1,2}\alpha A_{1,2}^{-1} \alpha^{-1} 
 = \sigma_1^2 \alpha_1 (\alpha_2 \cdots \alpha_l)\sigma_1^{-2} (\alpha_2 \cdots \alpha_l)^{-1} \alpha_1^{-1} \\ 
& = \sigma_1^2 \alpha_1 (\alpha_2 \cdots \alpha_l)(\alpha_2 \cdots \alpha_l)^{-1} \sigma_1^{-2}  \alpha_1^{-1} 
 = \sigma_1^2 \alpha_1 \sigma_1^{-2}  \alpha_1^{-1} \\ 
& = \sigma_1^2 \sigma_1 \sigma_2 (\sigma_3 \cdots \sigma_{j_1 - 1}) \sigma_1^{-2} (\sigma_3 \cdots \sigma_{j_1 - 1})^{-1}\sigma_2^{-1} \sigma_1^{-1}  \\ 
& = \sigma_1^2 \sigma_1 \sigma_2  (\sigma_3 \cdots \sigma_{j_1 - 1}) (\sigma_3 \cdots \sigma_{j_1 - 1})^{-1}  \sigma_1^{-2} \sigma_2^{-1} \sigma_1^{-1}  \\ 
& = \sigma_1^2 \sigma_1 (\sigma_1^{-1} \sigma_2^{-2} \sigma_1) \sigma_1^{-1}   = A_{1,2}A_{2,3}^{-1}, 
\end{align*}
and so $A_{1,2} = A_{2,3}$, which contradicts Remark \ref{rem.1}. Therefore, $H_{\alpha}$ cannot be bi-ordered, and so neither can $H_{\beta}$.

From cases $(a)$ and $(b)$, we conclude that $H_{\beta}$ is not bi-orderable for all possible cycle types of the permutation $\pi(\beta) \in S_n$. Hence, if $H$ is an intermediate subgroup of $B_n(D)$, then $H$ contains $H_{\beta}$ for some $\beta \in B_n(D) \setminus P_n(D)$, and so $H$ is not bi-orderable.
\end{proof}

\subsection{Consequences and proof of Theorem 2}\label{subsec:consequences}

\hspace{4mm} This subsection is devoted to giving some consequences of Theorem \ref{theo:teorema1}. We recall that the braid group $B_{\infty}(D)$ of the disk on an infinite number of strands is composed of braids with an infinite number of strands but finitely many crossings. More precisely, $B_{\infty}(D)$ is the direct limit of $B_n(D)$ under the injective homomorphism $\iota \colon B_n(D) \to B_{n+1}(D)$ given by $\iota(\sigma_i) = \sigma_i$ for all $1 \leq i \leq n-1$ (see \cite{D1, F1} and Remark \ref{remarkinclusion1}). There exists an epimorphism $\pi_{\infty} \colon B_{\infty} \to S_n$ such that the restriction $\left.\pi_{\infty}\right|_{B_{n(D)}}$ is the permutation homomorphism $\pi \colon B_n(D) \to S_n$. The kernel of the map $\pi_{\infty}$ is defined to be $P_{\infty}(D)$, which is the direct limit of $P_n(D)$ under the restriction $\left.\iota\right|_{P_n(D)}$. We now show that intermediate subgroups of $B_{\infty}(D)$ are not bi-orderable.

\begin{coro}\label{cor:Binfinito}
Let $H$ be an intermediate subgroup of $B_{\infty}$(D), i.e. $P_{\infty}(D) \subsetneq H \subset B_{\infty}(D)$. Then, $H$ is not bi-orderable.
\end{coro}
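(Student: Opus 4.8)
The plan is to reduce the statement about $B_\infty(D)$ to Theorem \ref{theo:teorema1} via a finite-level argument. First I would take an intermediate subgroup $H$ with $P_\infty(D) \subsetneq H \subset B_\infty(D)$ and pick an element $\beta \in H \setminus P_\infty(D)$. Since $B_\infty(D)$ is the direct limit of the $B_n(D)$ under the maps $\iota$, every element of $B_\infty(D)$ lies in some $B_n(D)$; so there is an $n \geq 3$ with $\beta \in B_n(D)$, and since $\beta \notin P_\infty(D)$ and the restriction $\left.\pi_\infty\right|_{B_n(D)}$ is the permutation homomorphism $\pi$, we have $\pi(\beta) \neq 1$, i.e. $\beta \in B_n(D) \setminus P_n(D)$. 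Now consider the finite-level subgroup $H_\beta = \langle P_n(D), \beta \rangle \subset B_n(D)$; this is an intermediate subgroup of $B_n(D)$ in the sense of Theorem \ref{theo:teorema1} (it properly contains $P_n(D)$ because $\beta \notin P_n(D)$), so by that theorem it is not bi-orderable, and in fact the proof of Theorem \ref{theo:teorema1} exhibits concrete generalized torsion elements (or a failure of the unique root property) inside $H_\beta$.

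The key step is then to observe that $H_\beta$ is a subgroup of $H$: indeed $\beta \in H$ by choice, and $P_n(D) = P_\infty(D) \cap B_n(D) \subset P_\infty(D) \subset H$, so $\langle P_n(D), \beta\rangle \subset H$. Since bi-orderability is inherited by subgroups (a positive cone restricts to a positive cone), and since $H_\beta \subset H$ is not bi-orderable, $H$ cannot be bi-orderable either. Concretely, whichever witness the proof of Theorem \ref{theo:teorema1} produces — a pair of non-commuting elements $x,y \in H_\beta$ with $x^p = y^q$ for $|p|,|q|>1$, giving a generalized torsion element $[x,y]$ by Proposition \ref{prop:gt}, or directly a violation of Proposition \ref{lema:raizesunicas}$(i)$ — that same witness lives in $H$ and obstructs any bi-ordering of $H$ by Proposition \ref{lema:raizesunicas}.

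I expect the only mild subtlety to be the bookkeeping around the directed system: one must be slightly careful that $\iota$ and its restrictions are compatible with $\pi$ and $\pi_\infty$ (so that "non-pure at infinity" is detected at a finite level) and that the identification $P_n(D) = P_\infty(D) \cap B_n(D)$ is the correct one — but both follow immediately from the definition of $\pi_\infty$ as the epimorphism whose restriction to each $B_n(D)$ is $\pi$, together with $P_\infty(D) = \operatorname{Ker} \pi_\infty$. There is no genuine obstacle here; the corollary is a direct pushforward of Theorem \ref{theo:teorema1} along the direct limit, using nothing more than the fact that every element of a direct limit is supported at a finite stage and that subgroups of bi-orderable groups are bi-orderable.
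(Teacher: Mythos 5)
Your proposal is correct and follows essentially the same route as the paper: pick $\beta \in H \setminus P_{\infty}(D)$, locate it at a finite stage $B_n(D)$ where it is non-pure, apply Theorem \ref{theo:teorema1} to $\langle P_n(D), \beta \rangle$, and conclude via the inclusion $\langle P_n(D), \beta \rangle \subset H$ together with the fact that subgroups of bi-orderable groups are bi-orderable. Your version is in fact slightly cleaner than the paper's, which phrases the finite-stage reduction in terms of a minimal $m$ and an inclusion $i$ but uses the same underlying argument.
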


\begin{proof}
Let $\beta \in H \setminus P_{\infty}(D)$. Thus, there exist $m \in \mathbb{N}$ and $\beta' \in P_m(D)$ such that $m$ is the least positive integer that satisfies $i(\beta') = \beta$, where $i \colon P_m (D) \to P_{\infty} (D)$ is the natural inclusion. By Theorem~\ref{theo:teorema1}, the subgroup $\langle P_m(D), \beta \rangle$ is not bi-orderable. Identifying this subgroup with its image under $i$, we have $\langle P_m(D), \beta' \rangle \subset \langle P_{\infty}(D), \beta \rangle \subset H $, and so $H$ is not bi-orderable.
\end{proof}

We now prove Theorem \ref{teotoro}, which is an extension of Theorem \ref{theo:teorema1} to intermediate subgroups of compact, connected surfaces different from the sphere and the real projective plane.

\begin{proof}
Since $P_n(M) \subsetneq H$, there exists $\beta \in H \setminus P_n(M)$. Let $\alpha \in B_n(D) \subset B_n(M)$ be such that $\pi(\alpha) = \pi(\beta)$, where $\pi \colon B_n(M) \to S_n$ is the permutation homomorphism. Hence, $\alpha \beta^{-1} \in P_n(M)$. So there exists $\gamma \in P_n(M)$ such that $\alpha = \gamma \beta$ and, consequently, 
\begin{equation*}
\langle P_n(M), \alpha \rangle = \langle P_n(M), \gamma\beta \rangle = \langle P_n(M), \beta \rangle \subset H.
\end{equation*}
Thus, by Remark \ref{rem.2}, we have $H \supset \langle P_n(M), \alpha \rangle \supset \langle P_n(D), \alpha \rangle $. Since $\langle P_n(D), \alpha \rangle$ is not bi-orderable, by Theorem \ref{theo:teorema1}, it follows that $H$ is not bi-orderable either. \end{proof}

If $M$ is a compact, connected, non-orientable surface without boundary, recall that the pure braid groups $P_n(M)$ have generalized torsion for $n \geq 2$ \cite[Theorem 1.7]{GM}, and therefore are not bi-orderable. This immediately implies that no intermediate subgroup of $B_n(M)$ can be bi-orderable for such a surface $M$.

\begin{remark}\label{rem:1}
Let  $M$ be as in Theorem \ref{teotoro}, and let $n\geq 3$. Every intermediate subgroup of $B_n(M)$ has generalized torsion in its commutator subgroup. To see this, using Proposition~\ref{prop:gt} and the fact that $\langle P_n(D), \beta \rangle \subset \langle P_n(M), \beta \rangle$ for all $\beta \in B_n(D)$ (see Remark~\ref{rem.2}), it suffices to show that $H_{\beta}$ does not have the unique root property for certain braids $\beta \in B_n(D)$, which we now describe. For $n=3$, by Equations (\ref{eq:a})
and (\ref{eq:3}), the commutators $\left[\sigma_1A_{2,3}, A_{2,3} \sigma_1 \right]$ and $\left[\sigma_1\sigma_2, \sigma_2 \sigma_1 \right]$ are generalized torsion elements of $H_{\sigma_1}$ and $H_{\sigma_1\sigma_2}$ respectively. For $n\geq 4$, let us first  consider  the case where the  cycle decomposition of  $\pi(\beta)$  has a cycle of length greater than or equal  to $3$. With the notation defined in the proof of Theorem~\ref{theo:teorema1}, the commutator $\left[A_{1,2} \alpha A_{1,2}^{-1}, \alpha\right]$ is a generalized torsion element of $H_{\alpha}$,  by Equation (\ref{eq:13}). Now, suppose that    $\pi(\beta)$ is a product of disjoint transpositions. 
Then  $\left[A_{1,2} \Delta_i A_{1,2}^{-1}, \Delta_i\right]$ is a generalized torsion element of $H_{\Delta_i}$,
by Equation (\ref{eq:5}), for $i\geq 3$. Moreover, the elements $[\sigma_1A_{2,3}, A_{2,3}\sigma_1], [\sigma_1 \sigma_2, \sigma_2 \sigma_1]$ and $\left[A_{1,2} \Delta_i A_{1,2}^{-1}, \Delta_i\right]$ satisfy Equation (\ref{eq:out17}).  
\end{remark}

To finish the paper, we make the following remarks about the intermediate subgroups of $2$-braid groups of surfaces.

\begin{remark}
Let $M$ be a compact, connected surface without boundary. If $H$ is a subgroup of $B_2(M)$ such that $P_2 (M) \subsetneq H$, then $H = B_2(M)$. Indeed, for such a subgroup $H$, there exists $\gamma \in H \setminus P_2(M)$, so $\langle P_n(M), \gamma \rangle \subset H$. Let $\beta  \in B_2(M)$ be such that $\pi(\beta) = (1 \ 2)$. So $\pi(\beta) = \pi(\gamma)$, and thus $\langle P_n(M), \beta \rangle = \langle P_n(M), \gamma \rangle \subset H$. Therefore, $\beta \in H$ for any $\beta \in B_2(M)$, which implies that $H = B_2(M)$.
\end{remark}

\begin{remark}
By Theorem \ref{theo:presentationBn}, we have $B_2(D) \cong \mathbb{Z}$. Therefore, $B_2(D)$ is bi-orderable. The groups $B_2(S^2)$ and $B_2(\mathbb{R}P^2)$ are finite \cite{VB}, so they are not left-orderable. For the torus $T$, $B_2(T)$ is left-orderable \cite[Theorem 4.2.1]{MP}, whereas it is not bi-orderable \cite{BGG}. However, in general, it is not known if $2$-braid groups of compact, connected surfaces without boundary are left-orderable. For the Klein bottle $K$, $B_2(K)$ cannot be bi-ordered \cite{GM}.
\end{remark}

{\it Acknowledgment:} This work is part of the author's doctoral thesis. I would like to thank Prof. Daciberg Lima Gonçalves and Prof. John Guaschi for discussions.  The author received support from the Brazilian agencies CAPES (Grant \#88887.639802/2021-00) and CNPq (Grant \#200666/2022-3).

\bibliographystyle{crplain}
\bibliography{samplebib}

\rule{4.5cm} {1pt} \vspace{2mm}

 {\sc Raquel Magalhães de Almeida Cruz} ({\sl raquelcruz@ime.usp.br})
 
 Universidade de S\~ao Paulo -- Instituto de Matem\'atica e Estat\'istica.
 
  Rua do Mat\~ao, 1010, Cidade Universit\'aria, 05508-090, S\~ao Paulo SP, Brasil.

 Normandie Univ., UNICAEN, CNRS, LMNO, 14000 Caen, France.
 
\vspace{3mm} $$\star \ \star \ \star$$

\end{document}